\let\mathcal\mathscr
\numberwithin{equation}{section}
\newtheorem{theorem}{Theorem}[section]
\newtheorem{lemma}[theorem]{Lemma}
\newtheorem{conjecture}[theorem]{Conjecture}
\theoremstyle{definition}
\newtheorem*{ack}{Acknowledgements}
\newtheorem{remark}[theorem]{Remark}
\renewcommand{\d}{\mathrm{d}}
\renewcommand{\phi}{\varphi}
\renewcommand{\rho}{\varrho}
\newcommand{\0}{\mathbf{0}}
\newcommand{\ZZ}{\mathbb{Z}}
\newcommand{\NN}{\mathbb{N}}
\newcommand{\RR}{\mathbb{R}}
\newcommand{\cG}{\mathcal{G}}
\renewcommand{\leq}{\leqslant}
\renewcommand{\le}{\leqslant}
\renewcommand{\geq}{\geqslant}
\renewcommand{\bar}{\overline}
\newcommand{\x}{\mathbf{x}}
\renewcommand{\c}{\mathbf{c}}
\renewcommand{\v}{\mathbf{v}}
\renewcommand{\u}{\mathbf{u}}
\newcommand{\z}{\mathbf{z}}
\renewcommand{\b}{\mathbf{b}}
\renewcommand{\a}{\mathbf{a}}
\newcommand{\ve}{\varepsilon}
\newcommand{\e}{\mathbf{e}}
\newcommand{\bxi}{\boldsymbol{\xi}}
\DeclareMathOperator{\vol}{vol}
\DeclareMathOperator{\supp}{supp}
\newcommand{\sumstar}{\sideset{}{^*}\sum}
\renewcommand{\phi}{\varphi}
\newcommand{\BR}{{\mathbb{R}}}
\newcommand{\vect}[1]{{\boldsymbol{#1}}}
\renewcommand{\mod}{\mathop{\rm mod}\nolimits}
\begin{document}

\subjclass[2010]{11E25 (11D09, 11P55, 81P68)}
\date{\today}

\title[Twisted Linnik implies optimal covering exponent]{Twisted Linnik implies optimal covering exponent for $S^3$}

\author{T.D. Browning}
\author{
 V. Vinay Kumaraswamy}
\author{R.S. Steiner}
\address{School of Mathematics\\
University of Bristol\\ Bristol\\ BS8 1TW
\\ UK}

\email{t.d.browning@bristol.ac.uk}
\email{vinay.visw@gmail.com}
\email{raphael.steiner@bristol.ac.uk}

\begin{abstract}
We  show  that a  twisted variant of Linnik's conjecture on sums of Kloosterman sums leads to an optimal covering exponent  for $S^3$.
\end{abstract}

\maketitle

\thispagestyle{empty}

\setcounter{tocdepth}{1}
\tableofcontents

\section{Introduction}

For any $r>0$,
let $S^3(r) \subset \mathbb{R}^4$ denote the hypersphere
$$
x_1^2+x_2^2+x_3^2+x_4^2=r^2,
$$
of radius $r.$ (We set $S^3=S^3(1)$ for the unit hypersphere.)
In his letter \cite{sarnak} about the efficiency of a universal set of quantum gates, Sarnak has raised the 
question of how well one can approximate points on $S^3$ by rational and $S$-integral points of small height.

Consider the ball $B_\ve(\bxi)= \{\x\in \RR^4: 
\|\x-\boldsymbol{\xi}\|<\varepsilon\}$, for any $\ve>0$ and any  $\boldsymbol{\xi} \in S^3$, where 
$\|\cdot\|$ denotes  the Euclidean norm on $\RR^4$.
The spherical cap $S^3\cap B_\ve(\bxi)$ has volume
$\tfrac{4\pi}{3} \ve^3+O(\ve^5)$.
Given $r>0$ such that $r^2\in \ZZ$, 
we let 
$\lambda(r)$ denote the
maximal  volume 
of any  cap 
$S^3\cap B_\ve(\bxi)$, for
 $\bxi\in S^3$, 
which 
contains no points of the form $\x/r$, for $\x\in \ZZ^4$.
Sarnak then defines the {\em covering exponent}
to be 
\begin{equation}\label{eq:K}
K(S^3)=
\limsup_{r\to \infty} \frac{\log(\#S^3(r)\cap\ZZ^4)}{\log\left(
(\vol S^3)/\lambda(r)\right)
}.
\end{equation}
As is well-known, we have 
 $\vol S^3=2 \pi^2$ and 
$\#S^3(r)\cap\ZZ^4=c_r r^2(1+o(1))$, as $r\to \infty$,  
for an appropriate (slowly growing) function $c_r$ of $r$. 
According to \cite[Thm.~20.9]{IK}, we have 
$\log r \gg c_r\gg_\epsilon r^{-\epsilon}$, for any $\epsilon>0$,  as long as the largest power of 2 
dividing $r^2$ is bounded absolutely. 
In particular, the limit  in \eqref{eq:K} should  be 
understood as running over such $r$'s.

The ``big holes'' phenomenon, which is described in  \cite[Appendix 2]{sarnak}, shows that $K(S^3)\geq \frac{4}{3}$. 
Sarnak conjectures that this lower bound should be the truth, before using automorphic forms for $\mathrm{PGL}_2$  to show that $K(S^3)\leq 2$ in \cite[Appendix 1]{sarnak}.
This  upper bound was recovered  by Sardari \cite{S} by incorporating  Kloosterman's method into a smooth $\delta$-function  variant of the Hardy--Littlewood  method  due to  Duke, Friedlander and Iwaniec \cite{DFI}, and later 
developed 
extensively  by Heath-Brown \cite{HB}.
(Sardari's work is  actually much more general and, in fact, 
he obtains the optimal covering exponent $K(S^{n-1})=2-\frac{2}{n-1}$ for any $n>4$.)

Our main result establishes 
Sarnak's conjecture for $S^3$, under the assumption of 
a natural variant of  the Linnik conjecture about sums of Kloosterman sums. For any $m,n\in \ZZ$ and any $c\in \NN$, recall the definition
\begin{equation}\label{eq:kloosterman}
S(m,n;c)=\sum_{\substack{x\bmod{c}\\ (x,c)=1}} e_c(mx+n\bar{x}),
\end{equation}
of the Kloosterman sum, where $\bar{x}$ denotes the multiplicative inverse of $x$ modulo $c$.
We propose the following conjecture.

\begin{conjecture}[Twisted Linnik]\label{c:linnik} 
Let $B\geq 1$
and let  $m,n\in \ZZ$ be non-zero. 
Let $k\in \NN$ and let $a\in \ZZ/k\ZZ$. 
Then for any $\alpha\in [-B,B]$ we have
$$
\sum_{\substack{c \equiv a \mod{k}\\  c \leq X}}  \frac{S(m ,n;c)}{c}e\left(\frac{2\sqrt{mn}}{c} \alpha\right) 
 \ll_{\epsilon,k,B} (|mn| X)^{\epsilon},
 $$
for any $\epsilon>0$.
\end{conjecture}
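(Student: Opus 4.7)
My plan is to derive Conjecture~\ref{c:linnik} from Selberg's eigenvalue conjecture for the relevant congruence subgroups via Kuznetsov's trace formula, following the template by which the untwisted, unrestricted Linnik conjecture reduces to Selberg. The twist by $e(2\sqrt{mn}\alpha/c)=e(\alpha y/(2\pi))$ (with $y=4\pi\sqrt{mn}/c$) is an oscillation of bounded ``archimedean size'' whenever $\alpha\in[-B,B]$, so it should be absorbable into the test function for Kuznetsov; the progression condition $c\equiv a\pmod k$ is the new arithmetic constraint that demands extra care.

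After a smooth dyadic partition of the range $c\leq X$ via a weight $\phi_0(c/X)$, I would detect the progression with additive characters,
\begin{equation*}
\mathbf{1}_{c\equiv a(k)}=\frac{1}{k}\sum_{b\in\ZZ/k\ZZ}e_k\bigl(b(c-a)\bigr),
\end{equation*}
reducing the claim (say in the case $mn>0$) to bounding, for each $b\pmod k$, the sum $\cS(\phi):=\sum_{c\geq 1}c^{-1}S(m,n;c)\phi(4\pi\sqrt{mn}/c)$ against a triply twisted test function
\begin{equation*}
\phi(y)=\phi_0\!\left(\frac{4\pi\sqrt{mn}}{Xy}\right)\,e\!\left(\frac{\alpha y}{2\pi}\right)\,e\!\left(\frac{2b\sqrt{mn}}{ky}\right).
\end{equation*}
Kuznetsov's formula for $\SL{2}(\ZZ)$ then rewrites $\cS(\phi)$ as a spectral sum $\sum_j\frac{\overline{\rho_j(m)}\rho_j(n)}{\cosh(\pi t_j)}\widehat\phi(t_j)$ plus an Eisenstein contribution, where $\widehat\phi$ denotes the Bessel transform of $\phi$.

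The next step is a stationary-phase analysis of $\widehat\phi(t)$ against $J_{2it}(y)$, tracking the interference between the $\alpha$-twist (a phase linear in $y$) and the character twist (a phase proportional to $1/y$). This should localise $\widehat\phi(t)$ to an effective spectral window $|t|\leq T$ and bound it by a harmless factor of $(mnX)^\epsilon$ inside. Assuming Selberg's conjecture, the spectrum is real and the Deshouillers--Iwaniec spectral large sieve
\begin{equation*}
\sum_{|t_j|\leq T}\frac{|\rho_j(m)\rho_j(n)|}{\cosh(\pi t_j)}\ll_\epsilon(T^2+(mn)^{1/2})(mnT)^\epsilon,
\end{equation*}
together with the corresponding bound for the Eisenstein part, should yield $\cS(\phi)\ll_{k,B,\epsilon}(|mn|X)^\epsilon$; summing over the $k$ residue classes preserves this.

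The chief obstacle is keeping the effective spectral window $T$ small enough. At the scale $y\asymp\sqrt{mn}/X$ where $\phi$ is supported, the character twist $e_k(bc)=e(4\pi b\sqrt{mn}/(ky))$ carries a local frequency of order $X^2/(k\sqrt{mn})$, naively forcing the stationary spectral parameter to size $\asymp X/k$, which when fed into the large sieve is far too large. Making the plan succeed therefore seems to require re-interpreting the $b$-sum arithmetically---for instance by identifying $\sum_{c\equiv a(k)}c^{-1}S(m,n;c)\,\phi(\cdot)$ directly with Kuznetsov formulas on $\Gamma_0(k)$ for generalised Kloosterman sums at the cusp pair $(\infty,a/k)$, where the moduli automatically satisfy $c\equiv a\pmod k$ and no character opening is needed---rather than simply opening with additive characters. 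A secondary obstacle, essential to the formulation of the conjecture, is that even the Kim--Sarnak bound $\theta\leq 7/64$ on exceptional eigenvalues would contribute a polynomial loss in $(mn)^{1/2}$ incompatible with the target, so the full Selberg hypothesis appears genuinely necessary to rule out exceptional spectrum entirely.
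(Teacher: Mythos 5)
The statement you were asked to prove is a \emph{conjecture}: the paper does not prove it, does not claim a proof, and explicitly assumes it as a hypothesis in Theorem~\ref{t:main} and Theorem~\ref{thm1}. The authors even point out, citing Steiner's work, that the best current unconditional bounds toward this statement fall short of what Theorem~\ref{t:main} needs. So there is no ``paper's own proof'' for comparison; what you have written is an attempt to settle an open problem that the authors deliberately leave open.

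Beyond that mismatch, the strategy you outline cannot succeed, for two reasons that are worth making explicit. First, Selberg's eigenvalue conjecture is not sufficient to establish even the \emph{untwisted} Linnik conjecture in the range that matters here. After Kuznetsov, the spectral large sieve carries a main term of size roughly $T^2+(mn)^{1/2}$; this $(mn)^{1/2}$ is genuinely present, coming from the diagonal, and is unrelated to exceptional eigenvalues. In the Selberg range $\sqrt{|mn|}>X$ (which the paper stresses is precisely the range that arises in their application), $(mn)^{1/2}$ already exceeds $X$, so the large sieve alone returns a bound far weaker than $(|mn|X)^\epsilon$. This is exactly why Sarnak--Tsimerman obtain only $(X^{1/6}+(mn)^{1/6}+\cdots)(mnX)^\epsilon$ even under Ramanujan--Selberg, and why Linnik's conjecture is regarded as strictly harder than Selberg's. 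Your closing remark that ``the full Selberg hypothesis appears genuinely necessary'' misstates the situation: Selberg is not the bottleneck, and assuming it does not close the gap.

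Second, you treat the twist $e(\alpha y/(2\pi))=e(2\sqrt{mn}\,\alpha/c)$ as a benign bounded oscillation that ``should be absorbable'' and that ``should localise $\widehat\phi(t)$ to an effective spectral window.'' In fact the opposite is true when $|\alpha|$ is close to $1$: the Bessel kernel $J_{2it}(y)$ has leading oscillation $\asymp e^{\pm iy}/\sqrt{y}$ for large $y$, so the twist $e(\alpha y/(2\pi))$ with $|\alpha|\approx 1$ is in resonance with that phase and \emph{cancels} the oscillation one would otherwise exploit. The paper notes this explicitly when discussing Steiner's work: for $|\alpha|>1-\delta$ the twist ``cancels out the oscillatory behaviour of the Bessel functions, ultimately leading to slightly weaker estimates.'' And the paper's application requires $\alpha$ up to $1+O(\ve)$ (see the bounds $|\alpha|\leq 1$ and $|\alpha|\leq 1+O(\ve)$ in \S\ref{s:final}), so the adversarial regime is unavoidable. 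Your stationary-phase step therefore does not yield a small spectral window; in the critical case it yields essentially no spectral localisation at all, and the subsequent large-sieve bound blows up. The suggestion to use Kuznetsov directly on $\Gamma_0(k)$ (avoiding the additive-character opening of the progression) is sensible and is indeed how one would handle the congruence condition for fixed $k$, but it does not touch either of the two obstructions above.
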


For comparison, on invoking the triangle inequality,  it follows from  
 Weil's  bound for the Kloosterman sum (see \eqref{eq:weil}) that the left hand side has size $O_\epsilon(|mn|^\epsilon X^{\frac{1}{2}+\epsilon})$.
The usual {\em Linnik conjecture} corresponds to taking $\alpha=0$ in Conjecture \ref{c:linnik}.
The state of play concerning the case $\alpha=0$ is  discussed  in work of  Sarnak and Tsimerman \cite{ST}.  As evidence for Conjecture \ref{c:linnik}, Steiner \cite{steiner} has shown 
that the unconditional estimates achieved in \cite{ST} for $\alpha=0$ continue to hold for any $\alpha\in \RR$ such that $|\alpha|\leq 1-\delta$, for a fixed $\delta>0$. The case $|\alpha|>1-\delta$ is also discussed in \cite{steiner}, 
where the introduced twist cancels out the oscillatory behaviour of the Bessel functions, ultimately leading to slightly weaker estimates.
Unfortunately, the unconditional estimates obtained in \cite{steiner} are not sharp enough
to prove that that $K(S^3)<2$ unconditionally. 

Using  Sardari's work \cite{S} as a base, we shall establish the following result. 

\begin{theorem}\label{t:main}
Assume the twisted Linnik conjecture. Then $K(S^3)=\frac{4}{3}$.
\end{theorem}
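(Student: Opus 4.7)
The plan is to reduce the bound $K(S^3) \leq \tfrac{4}{3}$ (matching Sarnak's ``big holes'' lower bound) to producing integer points on $S^3(r)$ in small caps, then run the Heath--Brown $\delta$-function circle method as refined by Sardari \cite{S}, and finally appeal to Conjecture \ref{c:linnik} to control the resulting sum of Kloosterman sums. Since $\#S^3(r) \cap \ZZ^4 = r^{2+o(1)}$ on admissible $r$ and a cap $S^3 \cap B_\ve(\bxi)$ has volume $\asymp \ve^3$, the inequality $K(S^3) \leq \tfrac{4}{3}$ is equivalent to showing that, for every $\eta > 0$, every $\bxi \in S^3$, and every admissible $r$ large enough, the smoothly-weighted count
$$N(r, \bxi, \delta) := \sum_{\substack{\x \in \ZZ^4 \\ \|\x\|^2 = r^2}} w\!\left(\frac{\x - r\bxi}{r\delta}\right)$$
is positive (of its expected size) throughout the range $\delta \geq r^{-1/2+\eta}$, where $w$ is a fixed non-negative bump function on $\RR^4$.

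Applying the Heath--Brown $\delta$-function formula to $\|\x\|^2 - r^2 = 0$ and performing Poisson summation in $\x$ produces, after the standard evaluation of the complete quadratic exponential sum and after pinning down the $2$-adic behaviour of $c$, a representation
$$N(r, \bxi, \delta) = M(r, \bxi, \delta) + \sum_{\substack{c \equiv a \mod{k}}} \frac{1}{c} \sum_{\0 \neq \m \in \ZZ^4} S\!\left(-r^2, \tfrac14 \|\m\|^2; c\right) I_c(\m),$$
where $M(r, \bxi, \delta) \asymp r^2 \delta^3 \cdot r^{o(1)}$ is the expected positive main term obtained from the $\m = \0$ contribution together with the classical singular series for four-squares, and $I_c(\m)$ is a smooth oscillatory integral effectively supported on $c \leq r^{1+o(1)}$ and $\|\m\| \leq r^\epsilon \delta^{-1}$. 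A stationary-phase analysis of $I_c(\m)$, or equivalently an exact Bessel-function expansion as in \cite{S}, peels off a factor
$$e\!\left(\frac{2\sqrt{r^2 \cdot \tfrac14 \|\m\|^2}}{c} \, \alpha_{\m, \bxi}\right), \qquad \alpha_{\m, \bxi} := \frac{\langle \m, \bxi\rangle}{\|\m\|} \in [-1, 1],$$
multiplied by a smooth amplitude of uniformly bounded total variation in $c$.

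For each fixed $\m \neq \0$, partial summation transfers this amplitude, leaving a $c$-sum precisely of the shape controlled by Conjecture \ref{c:linnik} with $m = -r^2$, $n = \tfrac14 \|\m\|^2$, $B = 1$, and $\alpha = \alpha_{\m, \bxi}$. The conjectural bound $(|mn|X)^\epsilon$ represents a saving of order $X^{1/2}$ over what Weil's bound $|S(m,n;c)| \ll c^{1/2+\epsilon}$ plus the triangle inequality delivers, and this is precisely the additional input by which Sardari's unconditional result $K(S^3) \leq 2$ (which exploits only Weil in the $c$-sum) is upgraded to the optimal $K(S^3) \leq \tfrac{4}{3}$. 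Dyadic bookkeeping over $c$ and $\|\m\|$ then confirms that the off-diagonal total is $o(M(r, \bxi, \delta))$ throughout $\delta \geq r^{-1/2+\eta}$, whence $N(r, \bxi, \delta) > 0$ and the theorem follows.

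The main technical difficulty is the stationary-phase extraction: one must isolate the twist factor $e(2\sqrt{mn}\, \alpha/c)$ together with a smooth amplitude of slow variation that cleanly enters Conjecture \ref{c:linnik} after partial summation, with uniformity in $c$, $\m$, $r$, and especially as $|\alpha_{\m, \bxi}| \to 1$ (where the critical point of the phase approaches the boundary of $\supp w$ and the stationary-phase expansion degenerates). The $2$-adic bookkeeping producing the congruence class $c \equiv a \mod k$, and the quantitative dyadic check that twisted Linnik supplies exactly the amount of cancellation needed to reach $\tfrac{4}{3}$ and not merely something intermediate, are both delicate but fit within the framework already developed in \cite{S, steiner}.
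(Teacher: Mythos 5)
Your overall strategy is the paper's: set up Sardari's refinement of the Heath--Brown $\delta$-method, evaluate the complete exponential sum to expose a Kloosterman sum $S(m,n;q)$, extract the twist $e(2\sqrt{mn}\,\alpha/q)$ from $e_q(-2r\c.\bxi)$, and then apply Conjecture~\ref{c:linnik} via partial summation. But there is a genuine gap in the step where you assert that stationary phase leaves ``a smooth amplitude of uniformly bounded total variation in $c$.'' That is false for the oscillatory integral $I_q(\c)$ when $q$ is well below $Q$. Concretely, the paper's Lemma~\ref{lem:jumper} only gives $r^{2}\,\partial_r I_r^*(\v)\ll\ldots$ with $r=q/Q$, so $\partial_q I_q(\c)$ has total variation $\asymp Q/R$ on a dyadic block $q\sim R$, which blows up for $R\ll Q$. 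This is exactly the ``second problem'' flagged in the introduction: the $q$-derivative of the integral is too large except when $q\asymp Q$.

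The paper's fix, which your sketch omits, is to split according to the size of $\ve|\hat v_4|$ (Lemmas~\ref{lem:I-small} and~\ref{lem:I-stationary}) and, in the hard regime, to derive a full stationary-phase expansion of $I_r^*(\v)$ to arbitrary precision, extracting an \emph{additional} oscillatory factor $e(-\|\a\|^2/(2\ve\hat v_4))$ beyond your first-order $\alpha=\langle\m,\bxi\rangle/\|\m\|$. Only after absorbing that extra phase into $\alpha$ (note the corrected $\alpha=-(\hat c_4/\sqrt{F(\hat\c)}+\|\b\|^2/(2\hat c_4\sqrt{F(\hat\c)}))$, still lying in $[-1-O(\ve^2),\,1+O(\ve^2)]$) do the residual coefficients $J_{j,q}(\c)$ satisfy $q\,\partial_q J_{j,q}\ll N^\delta$ (Lemma~\ref{lem:partial}), thanks to the compact $y$-support of $h(r,y)$; partial summation must then be applied to \emph{each term of the expansion separately}. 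Without this, the claimed reduction to Conjecture~\ref{c:linnik} does not go through on the range of $q$ that actually dominates.
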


The proof of this theorem is founded on exploiting extra cancellation in sums of the form
$$ 
\sum_{\substack{q\equiv 1\bmod{2}\\ q\leq Q}} 
q^{-2}S(r^2,c_1^2+c_2^2+c_3^2+c_4^2;q)
e_{q}(-2r \c.\boldsymbol{\xi})K_q(\c),
$$
for non-zero vectors $\c\in \ZZ^4$, where $K_q(\c)$ is a certain $4$-dimensional oscillatory integral that is 
revealed through an examination of  \eqref{eq:goaty} and \eqref{eq:4}. 
(There are similar expressions for $q\equiv \{0,2\}\bmod 4$.) 
Whereas Sardari brings the modulus sign inside, before invoking  Weil's bound to estimate the Kloosterman sum, our goal is take advantage of sign changes in it.
There are three key problems  in carrying out this plan.  

The first two problems arise when 
using partial summation to remove the factor $q^{-1}e_{q}(-2r\c.\boldsymbol{\xi})K_q(\c)$ .  For typical vectors $\c$, the 
derivative of 
$e_{q}(-2r\c.\boldsymbol{\xi})$ with respect to $q$ is very large.
This deficiency is what lies behind our  need to study sums of Kloosterman sums twisted by an exponential factor, as in 
Conjecture \ref{c:linnik}. 
Similarly, the  derivative   $\frac{\partial}{\partial q}K_q(\c)$ 
is also too large,  unless $q$ has exact order of magnitude  $Q$.  This presents our second problem.  
To circumvent this difficulty we shall use stationary phase to get an asymptotic expansion of $K_q(\c)$, to arbitrary precision, before using partial summation 
to rid ourselves of each term in the asymptotic expansion separately. 

Finally, consider the expression in the left hand side of Conjecture \ref{c:linnik}.
The third problem comes from a need for complete uniformity in $m$ and $n$ in any unconditional treatment of this sum. In fact, in the present situation, we are faced with the harder {\em Selberg range}, where $\sqrt{|mn|}>X$.  
Although Steiner  \cite{steiner} has achieved unconditional bounds
that go beyond the Weil bound in certain ranges, these fall  short of yielding an unconditional proof that $K(S^3)<2$.  Thus, in our work, we shall be content  with showing that 
the optimal covering exponent is a consequence of our twisted version of Linnik's conjecture.

\begin{remark}
As outlined by Sarnak  \cite{sarnak}, the study of $K(S^3)$ has its roots in the Solovay--Kitaev theorem in theoretical quantum computing. 
Consider the   single qubit gate set $S=\{s_1^{\pm },s_2^{\pm },s_3^{\pm }\}\subset \mathrm{SU}(2)$, 
where
$$
s_1=\frac{1}{\sqrt{5}}
\left( \begin{matrix}1+2i&0\\ 0&1-2i \end{matrix} \right),\quad
s_2=\frac{1}{\sqrt{5}}
\left( \begin{matrix}1&2i\\ 2i&1 \end{matrix} \right),\quad
s_3=\frac{1}{\sqrt{5}}
\left( \begin{matrix}1&2\\ -2&1 \end{matrix} \right).
$$
This  set is symmetric and  topologically dense in $\mathrm{SU}(2)$.
Sarnak defines a covering exponent $K(S)$,  which 
measures how efficiently the free group $\langle S\rangle $ generated by $S$  covers $\mathrm{SU}(2)$.  It follows from Theorem \ref{t:main} 
that $K(S)=\frac{4}{3}$  under the assumption of the  twisted Linnik conjecture.
\end{remark}

 \begin{ack}
 The authors are grateful to Peter Sarnak for his encouragement and to the anonymous referee for helpful comments.
While working on this paper the first author was
supported by ERC grant \texttt{306457}.  
\end{ack}

 \section{Preliminaries}

\subsection{Overview}

Let $r\in \NN$ such that the power of $2$ dividing $r$ is bounded absolutely. 
Let  $N=4r^2$.  
Fix a choice of $\boldsymbol{\xi} \in \RR^4$ such that $F(\boldsymbol{\xi})=1$,
where $F$ henceforth denotes   the non-singular quadratic form
$$
F(\x)=x_1^2+x_2^2+x_3^2+x_4^2.
$$
For any $\ve>0$, we let 
$$
S_\ve(N)=\left\{\x\in \ZZ^4: F(\x)=N, ~
\|\x/\sqrt{N}-\boldsymbol{\xi}\|<\varepsilon
\right\}.
$$
Our primary objective is to produce a lower bound on $\ve$, in terms of $N$, which is sufficient
to ensure that $S_\ve(N)$ is non-empty.
Sardari's work shows that $S_{\ve}(N)\neq \emptyset$ if $\ve\gg_\delta N^{-\frac{1}{6}+\delta}$, for any $\delta>0$.
This implies that $$
\lambda(r)\ll_\delta N^{-\frac{1}{2}+\delta}=(2r)^{-1+2\delta},
$$ 
for any $\delta>0$, whence  $K(S^3)\leq 2$ in \eqref{eq:K}.
Assuming Conjecture \ref{c:linnik}, 
we shall show that $S_{\ve}(N)\neq \emptyset$ if 
$\ve\gg_\delta N^{-\frac{1}{4}+\delta}$, for any $\delta>0$. This
implies that $\lambda(r)\ll_\delta r^{-\frac{3}{2}+2\delta}$,
whence $K(S^3)\leq \frac{4}{3}$, as required to complete the proof of Theorem  \ref{t:main}.

\subsection{Notation}\label{s:notation}

We denote by $\|\cdot\|$ the usual Euclidean norm, so 
that  $\|\x\|=\sqrt{F(\x)}$ 
 on $\RR^4$. 
 Throughout our work we  reserve  $\delta>0$ for  a small positive parameter.

One of the key innovations in Sardari's work \cite{S}
concerns the introduction of a new basis given by the tangent space of $F$ at $\bxi$ and we proceed to recall the construction here.
Let $\e_4=\vect{\xi}$. (This  is the unit vector in the direction of $\nabla F(\boldsymbol{\xi})=2\bxi$.)
Choose an orthonormal basis $\e_1,\e_2,\e_3$ for the tangent space 
$T_{\boldsymbol{\xi}}(F) = \e_4^{\perp}$.
Recalling that $F(\bxi)=1$, it therefore follows that 
$$
F(u_1\e_1+\dots+u_4\e_4)=F(\u),
$$
for any  $\u\in \RR^4$. Finally, any vector $\b\in \RR^4$ can be written $\b=\sum_{i=1}^4 \hat b_i \e_i$, with $\hat b_i=\b.\e_i$, for $1\leq i\leq 4$.

\subsection{Activation of the circle method}

We begin by choosing  a smooth   function $w_0:\RR\to \RR_{\geq 0}$ with unit mass, such that 
$
\supp(w_0)=[-1,1].$ 
  We will  work with the weight function
$w:\RR^4\to \RR_{\geq 0}$, given by
\begin{equation}\label{eq:ww}
w(\x) =
w_0\left(\frac{\|\x-\bxi\|}{\varepsilon}\right)
w_0\left(\frac{2\bxi . (\boldsymbol{x-\xi})}{\varepsilon^2}\right).
\end{equation}
Let
$$
\Sigma(w) = \sum_{\substack{\x\in \ZZ^4\\ F(\x)=N}} w\left(\frac{\x}{\sqrt{N}}\right),
$$
for any $N\in 4\NN$. 
We want conditions on $\ve$, in terms of $N$, under which  $\Sigma(w)>0$. Indeed, 
if $\Sigma(w)>0$, then 
there exists a vector $\x\in \ZZ^4$
such that $F(\x)=N$ and 
$$
\|\x/\sqrt{N}-\bxi\|<\ve, \quad
|2\bxi.(\x/\sqrt{N}-\bxi)|<\ve^2.
$$
It follows from Sardari's argument that $\Sigma(w)>0$ if $\ve\gg_\delta N^{-\frac{1}{6}+\delta}$,  for  any $\delta>0$. Our goal is to draw the same conclusion provided that 
$\ve\gg_\delta N^{-\frac{1}{4}+\delta}$.

A few words are in order regarding the  inequality 
$|2\bxi.(\x/\sqrt{N}-\bxi)|<\ve^2$ 
that is enshrined in our counting function $\Sigma(w)$.
Suppose that $\|\x/\sqrt{N}-\bxi\|<\ve$. Then we may write 
 $\x/\sqrt{N} =\bxi + \varepsilon \z$, with $\|\z\| < 1.$ 
Under this change of variables, the inequality $|2\bxi.(\x/\sqrt{N}-\bxi)|<\ve^2$  is  equivalent to 
$|2\bxi.\z|<\ve$, and 
\begin{equation*}
F(\x) - N = N\left(2\varepsilon \bxi.\z + \varepsilon^2 F(\z)\right).
\end{equation*}
Thus, we must have 
$|2\bxi.\z|<\ve$
when the left hand side vanishes. Moreover it is clear that 
$F(\x) - N \ll \varepsilon^2 N$
for any $\x$ such that $w(\x/\sqrt{N})\neq 0$.

One ``level lowering'' effect of this is that we are allowed to take 
$$
Q=\ve\sqrt{N}
$$ 
in the version of the circle method 
recorded by Heath-Brown
 \cite[Thm.~2]{HB}, rather than $Q=\sqrt{\ve N}$, as might at first appear.
We conclude that there exists a constant  
$c_Q=1+O_A(Q^{-A})$, for any $A>0$, such that
\begin{equation}\label{eq:2}
\Sigma(w) = \frac{c_Q}{Q^2}\sum_{q=1}^{\infty}\sum_{\c \in \mathbb{Z}^4}q^{-4}S_q(\c)I_q(\c),
\end{equation} 
where
\begin{equation}\label{eq:3}
\begin{split}
S_q(\c) &= \sumstar_{\substack{a \mod{q}}} \sum_{\b \mod{q}} e_q\left(a\left\{F(\b) -N\right\}+ \b.\c\right), \\
I_q(\c) &= \int_{\RR^4} w\left(\frac{\x}{\sqrt{N}}\right) h\left(\frac{q}{Q},\frac{F(\x)-N}{Q^2}\right)e_q(-\c.\x) \, \d\x.
\end{split}
\end{equation}
Here $h:(0,\infty)\times\RR\to \RR$ is a certain function such that $h(x,y)\ll x^{-1}$ for all $y$ and 
$h(x,y)=0$ unless $x\leq \max\{1,2|y|\}$.
In particular, only values of  $q\ll Q$ contribute to  $\Sigma(w)$ in \eqref{eq:2}.
Thus, in all that  follows, we may henceforth assume that $Q\geq 1$; viz. $\ve^{-1}\leq \sqrt{N}$.

We shall prove that 
Conjecture \ref{c:linnik} implies 
$\Sigma(w)>0$ if $\ve\gg_\delta N^{-\frac{1}{4}+\delta}$, for any $\delta>0$. In fact we shall establish an asymptotic formula for $\Sigma(w)$, in which the main term involves a pair of constants $\sigma_\infty$ and $\mathfrak{S}$.
The constant  $\sigma_\infty$ is equal to the weighted real density of points on $S^3$ and is 
given explicitly  in \eqref{eq:sigma}.  The constant $\mathfrak{S}$ is the usual product of non-archimedean local densities, with value 
\begin{equation}\label{eq:density}
\mathfrak{S}=\prod_p \sigma_p,  \quad \sigma_p=\lim_{k\to \infty} p^{-3k}\#\{\x\in (\ZZ/p^k\ZZ)^4: F(\x)\equiv N \bmod{p^k}\}.
\end{equation}
We may now record our main result.

\begin{theorem}\label{thm1}
Assume Conjecture \ref{c:linnik}. Then, for any $\delta>0$, we have
$$
\Sigma(w)=\frac{\ve^3N \sigma_\infty \mathfrak{S}}{2}
+O_\delta\left(
\ve^4N^{1+\delta}+
\ve^{\frac{5}{2}}N^{\frac{3}{4}+\delta}+\varepsilon N^{\frac{1}{2}+\delta}\right).
$$
\end{theorem}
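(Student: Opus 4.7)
The plan is to split the sum in \eqref{eq:2} into the diagonal contribution $\c=\0$, which supplies the main term, and the off-diagonal contribution $\c\neq \0$, controlled by Conjecture \ref{c:linnik}. For the diagonal, classical arguments (as in \cite{HB}) factor $q^{-3}S_q(\0)$ multiplicatively, whose Dirichlet series evaluates to the singular series $\mathfrak{S}$ in \eqref{eq:density}, while $\sum_q q^{-1} I_q(\0)$ realises the smooth $\delta$-mass approximation of the weighted real volume of $\{F(\x)=N\}\cap \supp w$, yielding $\ve^3 N\sigma_\infty/2$. No appeal to Conjecture \ref{c:linnik} is needed here; the residual truncation is of admissible size.

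For the off-diagonal contribution I would first evaluate $S_q(\c)$ in closed form. Completing the square in $\b$ gives $\sum_{\b\bmod q} e_q(aF(\b)+\b.\c)=q^2 e_q(-\overline{4a}F(\c))$ up to a quartic Gauss-sum factor, whence summation over primitive $a\bmod q$ produces the Kloosterman sum $S(r^2,F(\c);q)$ for odd $q$ (with analogous formulae for $q\equiv 0,2\bmod 4$), using $N=4r^2$. Next I would apply stationary phase to $I_q(\c)$ in the adapted basis of Section \ref{s:notation}. After the change of variables $\x=\sqrt{N}(\bxi+\ve\z)$, the oscillatory factor $e_q(-\c.\x)$ splits as $e_q(-2r\c.\bxi)\cdot e_q(-\ve\sqrt N\,\c.\z)$; the first factor is independent of the integration variable and produces exactly the twist appearing in Conjecture \ref{c:linnik}, while the second is analysed by stationary phase against $w$ and $h(q/Q,\cdot)$. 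Iterating yields an expansion
$$
I_q(\c) = e_q(-2r\c.\bxi)\sum_{0\leq j\leq J} K_{q,j}(\c) + O_J(\text{negligible}),
$$
in which each $K_{q,j}(\c)$ is smooth in $q\in(0,Q]$ with $q$-derivative of order $q^{-1}$, of decreasing amplitude in $j$, and rapidly decaying once $\|\c\|$ exceeds $\ve^{-1}N^\delta$.

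The core step is to apply partial summation in $q$ to each of the resulting expressions
$$
\sum_{\substack{q\equiv a\bmod 4\\ q\leq Q}} q^{-2}\,S(r^2,F(\c);q)\,e_q(-2r\c.\bxi)\,K_{q,j}(\c),
$$
transferring the smooth $q$-weight onto a supremum prefactor. What remains is precisely the sum in Conjecture \ref{c:linnik} with $m=r^2$, $n=F(\c)$, $k=4$, and twist parameter $\alpha=-(\c.\bxi)/\|\c\|$, which by Cauchy--Schwarz satisfies $|\alpha|\leq 1$, so the conjecture applies with $B=1$. This yields a bound of size $O_\epsilon((r\|\c\|)^\epsilon)$ on the inner sum, a gain of essentially $Q^{1/2}$ over the Weil-bound baseline used in \cite{S}. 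Summing over $\c$ in the effective range $\|\c\|\ll \ve^{-1}N^\delta$ and combining with the admissible sizes of the $K_{q,j}(\c)$ produces the three error terms in the theorem, coming respectively from the leading-order stationary-phase term, the subleading terms, and the truncation residual from the diagonal expansion.

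The main obstacle I anticipate is the bookkeeping of the stationary phase expansion: one must generate enough amplitudes $K_{q,j}(\c)$ to reduce the remainder to a negligible size, and for each one verify both the smoothness in $q$ needed to run partial summation and the decay in $\|\c\|$ needed to make the $\c$-sum converge, all while preserving the exact oscillatory factor $e_q(-2r\c.\bxi)$ that is absorbed by Conjecture \ref{c:linnik}. A secondary concern is uniformity: the polynomial dependence on $|mn|^\epsilon=(r^2 F(\c))^\epsilon$ in Conjecture \ref{c:linnik} must be shown harmless, which follows because $\|\c\|\ll \ve^{-1}N^\delta\leq N^{1/2+\delta}$ in the effective range.
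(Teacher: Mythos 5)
Your plan follows the paper's approach closely: main term from $\c=\0$ with the singular series, Gauss-sum evaluation of $S_q(\c)$ producing Kloosterman sums, asymptotic expansion of $I_q(\c)$, partial summation in $q$, and absorption of the twisted Kloosterman sum via Conjecture~\ref{c:linnik}, with the twist parameter $\alpha=-\hat c_4/\|\c\|$ (your $-\c.\bxi/\|\c\|$) correctly identified in the basic regime. Two technical issues, however, are glossed over and are precisely where the paper has to work hardest. First, the assertion that ``each $K_{q,j}(\c)$ is smooth in $q$ with $q$-derivative of order $q^{-1}$'' is not available off the shelf: the direct derivative bound (Lemma~\ref{lem:jumper}, $k=1$) carries a factor $r^{-2}=(Q/q)^2$ and is only usable when $q$ is close to $Q$. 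For small $q$ the paper must first peel off the stationary-phase phase $e(-\|\b\|^2/(2\ve\hat c_4))$; only after that removal does the remaining factor $J_{j,q}(\c)$ satisfy $q\,\partial_q J_{j,q}(\c)\ll N^\delta$ (Lemma~\ref{lem:partial}). Without this step, partial summation does not close in the small-$q$ range. Second, because the twist must absorb not just $e_q(-2r\c.\bxi)$ but also the newly extracted phases, the parameter $\alpha$ becomes $-\bigl(\hat c_4/\sqrt{F(\hat\c)}+\|\b\|^2/(2\hat c_4\sqrt{F(\hat\c)})\bigr)$ in the stationary-phase regime (and a similar modification with $\x$-dependent terms in the Taylor regime of Lemma~\ref{lem:I-small}), so one only gets $|\alpha|\leq 1+O(\ve)$ rather than $|\alpha|\leq 1$; this is why Conjecture~\ref{c:linnik} is stated for an arbitrary fixed $B\geq 1$, not $B=1$ as you claim. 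Relatedly, a single stationary-phase expansion does not cover all $\c$: for $\ve|\hat v_4|\leq 1$ there is no genuine stationary point and one must instead Taylor-expand the oscillatory factor (Lemma~\ref{lem:I-small}), with yet another form for $\alpha$. So the right structure is a three-way split (large $q$; small $q$ with small $\ve|\hat v_4|$; small $q$ with large $\ve|\hat v_4|$), each with its own expansion and its own $\alpha$, which your sketch collapses into one case.
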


We shall see that  $\sigma_\infty\gg 1$ in \eqref{eq:sigma}. Likewise, 
as remarked upon by Sardari \cite[Remark 1.4]{S}, we have $\mathfrak{S}\gg_\delta N^{-\delta}$ for any $\delta>0$,
if the power of $2$ dividing $N$ is bounded. 
 Thus Theorem \ref{thm1} implies Theorem \ref{t:main}.

The remainder of the paper is as follows. 
In \S \ref{s:sum} we shall explicitly evaluate the sum $S_q(\c)$ using
Gauss sums. Next, in \S \ref{s:integral}, we shall study the oscillatory integrals $I_q(\c)$ using stationary phase. Finally, in \S \ref{s:final}, we shall combine the various estimates and complete the proof of Theorem \ref{thm1}.

\section{Gauss sums and Kloosterman sums}\label{s:sum}

In this section we 
explicitly evaluate the exponential sum 
$S_q(\c)$ in \eqref{eq:3}, for $\c\in \ZZ^4$ and relate it to the Kloosterman sum
$S(m,n;c)$ in \eqref{eq:kloosterman}. The latter sum satisfies the well-known Weil bound 
\begin{equation}\label{eq:weil}
|S(m,n;c)|\leq \tau(c)\sqrt{(m,n,c)}\sqrt{c},
\end{equation}
where $\tau$ is the divisor function. 

Recalling that $N\in 4\NN$, it will be convenient to write $N=4N'$ for $N'\in \NN$.
We have
\begin{equation}\label{eq:Sq}
S_q(\c) = \sumstar_{a\bmod{q}} e_q(-4aN')\prod_{i=1}^4 \cG(a,c_i;q),
\end{equation}
where
$$
\cG(s,t;q)=\sum_{b \bmod{q}}e_q\left(sb^2+tb\right),
$$
for given non-zero integers $s,t,q$ such that $q\geq 1$.
The latter sum is classical and may be evaluated. 
Let 
$$
\delta_n=\begin{cases} 0, & \mbox{ if }n\equiv 0\bmod{2}, \\ 1 & \mbox{ if } n\equiv 1\bmod{2}, \end{cases}
\quad
\epsilon_n=\begin{cases} 1, & \mbox{ if }n\equiv 1\bmod{4}, \\ i, & \mbox{ if } n\equiv 3\bmod{4}.
\end{cases}
$$
The following result is recorded in \cite[Lemma 3]{BB}, but goes back to Gauss.

\begin{lemma} \label{gaussevlemma} 
Suppose that $(s,q)=1$. Then 
$$ 
\mathcal{G}(s,t;q)=
\begin{cases}
\epsilon_q \sqrt{q}  \left(\frac{s}{q}\right) e\left(-\frac{\overline{4s}t^2}{q}\right) & \text{ 
if $q$ is odd,}\\
2  \delta_t \epsilon_v\sqrt{v} \left(\frac{2s}{v}\right) e\left(-\frac{\overline{8s}t^{2}}{v}\right) &
\text{ if $q=2v$, with $v$ odd,}\\
(1+i) \epsilon_s^{-1} (1-\delta_t) \sqrt{q}\left(\frac{q}{s}\right) e\left(-\frac{\overline{s}t^{2}}{4q}\right) 
& \text{ 
if $4\mid q$.}
\end{cases}
$$
\end{lemma}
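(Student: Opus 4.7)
The plan is to reduce all three cases to the classical quadratic Gauss sum evaluation
$$
\sum_{b \bmod{q}} e_q(s b^2) \;=\;
\begin{cases}
\epsilon_q \sqrt{q}\, \left(\frac{s}{q}\right), & q \text{ odd},\\
(1+i)\,\epsilon_s^{-1}\,\sqrt{q}\,\left(\frac{q}{s}\right), & 4 \mid q,
\end{cases}
$$
valid for $(s,q) = 1$, combined with the Chinese Remainder Theorem identity
$$
\mathcal{G}(s,t;q_1 q_2) \;=\; \mathcal{G}(s q_2,t;q_1)\,\mathcal{G}(s q_1,t;q_2) \qquad ((q_1,q_2)=1),
$$
obtained by writing $b = q_2 b_1 + q_1 b_2$ and observing that the cross term $2s q_1 q_2 b_1 b_2$ vanishes modulo $q_1 q_2$.

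For $q$ odd, both $2$ and $s$ are invertible modulo $q$, so I complete the square $sb^2 + tb \equiv s(b + \overline{2s}\,t)^2 - \overline{4s}\, t^2 \pmod{q}$, translate $b$ by $-\overline{2s}\,t$, pull out the constant phase $e_q(-\overline{4s}\,t^2)$, and invoke the classical formula above. For $q = 2v$ with $v$ odd, the CRT identity gives $\mathcal{G}(s,t;2v) = \mathcal{G}(sv,t;2)\cdot\mathcal{G}(2s,t;v)$; a direct evaluation shows the first factor equals $1 + (-1)^{sv+t} = 2\delta_t$ (since $s,v$ are both odd), while the second factor, having $(2s,v) = 1$, is handled by the odd case and produces the stated expression after noting $\overline{4\cdot 2s} = \overline{8s}$ modulo $v$.

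For $4 \mid q$, write $q = 2^k m$ with $k \geq 2$ and $m$ odd, so that CRT gives $\mathcal{G}(s,t;q) = \mathcal{G}(sm,t;2^k)\cdot\mathcal{G}(s \cdot 2^k,t;m)$. The odd part again reduces to the first case. For the $2$-part, the substitution $b \mapsto b + 2^{k-1}$ multiplies $\mathcal{G}(sm,t;2^k)$ by $e_{2^k}(t \cdot 2^{k-1}) = (-1)^t$ (the quadratic contribution $sm\cdot 2^{2k-2}$ vanishes since $k \geq 2$), so the sum vanishes when $t$ is odd, which accounts for the factor $(1 - \delta_t)$. When $t$ is even, write $t = 2t_1$ and complete the square as $smb^2 + 2t_1 b \equiv sm(b + \overline{sm}\, t_1)^2 - \overline{sm}\, t_1^2 \pmod{2^k}$; this reduces the 2-part to the classical evaluation $\sum_b e_{2^k}(sm b^2)$ above.

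The main obstacle is purely bookkeeping: collecting the various phases, Jacobi symbols and $\epsilon$-factors produced by the CRT splits, particularly in the third case, and recombining them — via quadratic reciprocity and the standard multiplicativity relations for the $\epsilon$-factors — into the compact forms $\overline{s}\,t^2/(4q)$, $(q/s)$ and $\epsilon_s^{-1}$ stated in the lemma. Conceptually there is nothing new, which is why the result is classical and due to Gauss; the entire verification amounts to a patient matching of data across the odd/even splitting of $q$.
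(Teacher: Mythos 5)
The paper does not actually prove this lemma: it states that the result ``is recorded in \cite[Lemma 3]{BB}, but goes back to Gauss,'' so there is no in-paper argument to compare against. Your proposal is a correct reconstruction of the standard derivation: the CRT factorization $\mathcal{G}(s,t;q_1q_2)=\mathcal{G}(sq_2,t;q_1)\mathcal{G}(sq_1,t;q_2)$, completion of the square when the modulus is odd or when $4\mid q$ and $t$ is even, the shift $b\mapsto b+2^{k-1}$ to kill the odd-$t$ case for $4\mid q$, and reduction to the pure quadratic Gauss sum $\sum_b e_q(sb^2)$. I checked the bookkeeping you deferred in the third case, namely that $\epsilon_{sm}^{-1}\epsilon_m\bigl(\tfrac{2^k}{sm}\bigr)\bigl(\tfrac{s2^k}{m}\bigr)=\epsilon_s^{-1}\bigl(\tfrac{q}{s}\bigr)$ via quadratic reciprocity (the signs $(-1)^{(s-1)(m-1)/4}$ from the Jacobi-symbol flip and from $\epsilon_{sm}^{-1}\epsilon_m/\epsilon_s^{-1}$ cancel), and that the two phases recombine to $e(-\overline{s}t^2/(4q))$ via the identity $\tfrac{1}{ab}\equiv\tfrac{\bar b}{a}+\tfrac{\bar a}{b}\pmod 1$ with $a=2^{k+2}$, $b=m$; everything matches.
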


Our analysis of $S_q(\c)$ now differs according to the $2$-adic valuation of $q$.
In each case we shall be led to an appearance of the Kloosterman sum \eqref{eq:kloosterman}.

Suppose first that  $q\equiv 1\bmod{2}$.
Substituting Lemma \ref{gaussevlemma} into \eqref{eq:Sq} we directly obtain
$$
S_q(\c) = q^2\sumstar_{a\bmod{q}} e_q(-4aN'-\overline{4a}F(\c))
=q^2S(N',F(\c);q),
$$
since $S(A,tB;q)=S(tA,B;q)$ for any $t\in (\ZZ/q\ZZ)^*$.

If $q\equiv 2\bmod{4}$ then we write $q=2v$, for odd $v$. This time we obtain 
\begin{align*}
S_q(\c) 
&= 2^4\delta_{c_1c_2c_3c_4}v^2
\sumstar_{a\bmod{q}} e_q(-4aN')e_v(-\overline{8a}F(\c))\\
&= 4\delta_{c_1c_2c_3c_4}q ^2
S(N',F(\c)/4;v)\\
&= 4\delta_{c_1c_2c_3c_4}q ^2
S(2N',F(\c)/2;q), 
\end{align*}
since $4\mid F(\c)$, when all the $c_i$ are odd. 

If  $q\equiv 0\bmod{4}$, it  follows from Lemma \ref{gaussevlemma} that
\begin{align*}
S_q(\c) 
&= -4(1-\delta_{c_1})\dots (1-\delta_{c_4})q^2
~\sumstar_{a\bmod{q}} ~e_q(-4aN')e_{4q}(-\overline{a}F(\c)).
\end{align*}
Thus, in this case,  we find that 
$$
S_q(\c) =
\begin{cases}
0 & \text{ if $2\nmid \c$},\\
-4q^2 S(N,F(\c');q) &
\text{ if $\c=2\c'$ for $\c'\in \ZZ^4$.}
\end{cases}
$$

\section{Oscillatory  integrals}\label{s:integral}

Recall the definition 
\eqref{eq:3} of 
$I_q(\c)$, in which $w$ is given by \eqref{eq:ww}. 
We make the change of variables $\x=\sqrt{N}\x'$ and
$\x' = \boldsymbol{\xi} + \varepsilon\z$. This leads to the expression
\begin{align*}
I_q(\c) 
&= 
N^2 \int_{\RR^4} w\left(\x'\right) h\left(\frac{q}{Q},\frac{F(\x')-1}{\varepsilon^2}\right)e_{\frac{q}{\sqrt{N}}}(-\c.\x') \, \d\x'\\
&=
\ve^4N^2 e_{\frac{q}{\sqrt{N}}}(-\c.\boldsymbol{\xi}) \int_{\RR^4} w_0(\|\z\|)w_0\left(\frac{2\boldsymbol{\xi}.\z}{\varepsilon}\right)h\left(\frac{q}{Q},\frac{y(\z)}{\varepsilon}\right)e_{\frac{q}{\varepsilon\sqrt{N}}}(-\c.\z) \, \d\z,
\end{align*} 
where
$y(\z) = 2\bxi.\z+ \varepsilon F(\z)$.
Let $r=q/Q$ and $\v = r^{-1}\c$. Then we have 
\begin{equation}\label{eq:goaty}
I_q(\c) =\ve^4N^2 e_{r}(-\varepsilon^{-1}\c.\boldsymbol{\xi})I_r^{*}(\v),
\end{equation}
where
\begin{equation}\label{eq:4}
I_r^{*}(\v) = \int_{\RR^4} w_0(\|\x\|)w_0\left(\frac{2\bxi.\x}{\varepsilon}\right) h\left(r,\frac{y(\x)}{\varepsilon}\right)e(-\v.\x) \, \d\x.
\end{equation}
In particular, we have
$I_r^*(\v)=O(\ve/r)$, since  $h(r,y)\ll r^{-1}$ and the region of integration has measure $O(\ve)$.

\subsection{Easy estimates}
Our attention now shifts to analysing  $I_r^{*}(\v)$ for $r\ll 1$ and $\v\in \RR^4$. 
Let $\x\in \RR^4$ such that 
$w_0(\|\x\|)w_0(2\bxi.\x/\varepsilon)\neq 0$. Then 
$$
\frac{y(\x)}{\ve}=\frac{2\bxi.\x+\ve F(\x)}{\ve}<2.
$$
Put $v(t)=w_0(t/6)$. Then $v(y(\x)/\ve)\gg 1$ whenever
$w_0(\|\x\|)w_0(2\bxi.\x/\varepsilon)\neq 0$.  
We may now write
$$
I_r^{*}(\v) = \frac{1}{r}\int_{\RR^4} w_3(\x) f\left(\frac{y(\x)}{\varepsilon}\right) e(-\v.\x) \, \d\x,
$$
where $f(y)=v(y)rh(r,y)$ and 
\begin{equation}\label{eq:w3}
w_3(\x)=\frac{w_0(\|\x\|)w_0(2\bxi.\x/\varepsilon)}{v(y(\x)/\ve)}.
\end{equation}
Let $p(t)=\hat f(t)$ be the Fourier transform of $f$.  Then the proof of  \cite[Lemma~17]{HB} 
shows that  
\begin{equation}\label{eq:england_sucks}
p(t) \ll_j r(r|t|)^{-j},
\end{equation}
for any $j>0$. We may therefore write
\begin{equation}\label{eq:irv}
I_r^{*}(\v) = \frac{1}{r}\int_{\RR}p(t) \int_{\RR^4}w_3(\x)e\left(t\frac{y(\x)}{\varepsilon} - \v.\x \right)\, \d\x \, \d t.
\end{equation}
Building on this, we proceed by  establishing the following result. 

\begin{lemma}\label{largec}
Let $\c \in \ZZ^4$, with $\c\neq \0$. 
Then
\begin{equation*}
I_q(\c) \ll_{j} \frac{\ve^5N^2 Q}{q}  \min_{i=1,2,3}\left\{|\hat c_i|^{-j},(\varepsilon |\hat c_4|)^{-j} \right\},
\end{equation*}
for any $j>0$.
\end{lemma}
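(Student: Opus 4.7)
The plan is as follows. From \eqref{eq:goaty} we have $|I_q(\c)| = \ve^4 N^2|I_r^*(\v)|$ with $r=q/Q\leq 1$ and $\v = \c/r$, so the claim reduces to showing
$$
|I_r^*(\v)| \ll_j \frac{\ve}{r}\,\min_{i\leq 3}\{|\hat c_i|^{-j},(\ve|\hat c_4|)^{-j}\}.
$$
I will work from the Fourier representation \eqref{eq:irv}, passing to the $\e$-basis of \S\ref{s:notation}, in which the inner phase is $\Phi(\x;t) = 2tx_4/\ve + tF(\x) - \sum_{i}\hat v_i x_i$. Inspection of \eqref{eq:w3} shows that $w_3$ is a smooth bump of unit scale in $(x_1,x_2,x_3)$ but of scale $\ve$ in $x_4$, so its $\partial_{x_i}$-derivatives are $O(1)$ for $i\leq 3$ and $O(\ve^{-1})$ for $i=4$. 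The outer weight $p(t)$ is $O(r)$ with rapid decay past $|t|\sim 1/r$, so $\int|p(t)|\,dt = O(1)$; moreover, for any $k\geq 2$ and any $A>0$, the tail $|t|>A$ integrates to $O(r^{1-k}A^{1-k})$ by \eqref{eq:england_sucks}.

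For each tangential index $i\in\{1,2,3\}$, note that $|\partial_{x_i}\Phi| = |2tx_i - \hat v_i| \gtrsim |\hat v_i|$ on the support of $w_3$ whenever $|t|\leq|\hat v_i|/4$. Since the amplitude has bounded $x_i$-derivatives, integration by parts $j$ times in $x_i$ yields $|J(t,\v)| \ll \ve|\hat v_i|^{-j}$ on that range, whence the corresponding contribution to $I_r^*(\v)$ is $\ll (\ve/r)|\hat v_i|^{-j}$. On the complementary range $|t|>|\hat v_i|/4$, the trivial bound $|J| \ll \ve$ combined with the tail bound on $p(t)$ with exponent $k=j+1$ evaluates to $(\ve/r)(r|\hat v_i|)^{-j} = (\ve/r)|\hat c_i|^{-j}$, which dominates the IBP contribution.

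The normal direction $i=4$ is more delicate because both the amplitude and the dominant $2t/\ve$ term in $\partial_{x_4}\Phi$ carry the factor $\ve^{-1}$. I would rescale $x_4 = \ve u$: this renders the amplitude smooth of unit scale in $u$, and the phase $\Phi$ becomes $2tu + tF(x_1,x_2,x_3) + O(\ve^2 tu^2) - \sum_{i\leq 3}\hat v_i x_i - \ve\hat v_4 u$, whose $u$-derivative equals $2t - \ve\hat v_4 + O(\ve^2 t)$ and is bounded below by $\ve|\hat v_4|/2$ for $|t|\leq\ve|\hat v_4|/4$. Integration by parts $j$ times in $u$ then delivers $|J(t,\v)| \ll \ve(\ve|\hat v_4|)^{-j}$, and the same split-and-decay argument in $t$ (with threshold $\ve|\hat v_4|/4$) produces the matching contribution $(\ve/r)(\ve|\hat c_4|)^{-j}$. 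Combining the four individual bounds, and falling back to the trivial $|I_r^*(\v)|\ll\ve/r$ whenever the relevant $|\hat c_i|$ or $\ve|\hat c_4|$ is $\leq 1$, completes the proof.

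The main obstacle is precisely this matching of scales in the $\e_4$-direction: each $\partial_{x_4}$ of the width-$\ve$ cutoff $w_0(2x_4/\ve)$ or of the factor $1/v(y(\x)/\ve)$ in $w_3$ costs $\ve^{-1}$, exactly the size of the leading term of $\partial_{x_4}\Phi$. The rescaling $x_4 = \ve u$ is what converts this delicate balance into a clean per-IBP gain of $(\ve|\hat v_4|)^{-1}$, and the $\ve$ residing inside that reciprocal is why the lemma only offers decay in $\ve|\hat c_4|$ for the normal direction, in contrast with the cleaner $|\hat c_i|$ available for the tangential ones.
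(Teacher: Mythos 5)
Your proof is correct and follows essentially the same route as the paper: pass to the $\e$-basis via $\x=\sum_i u_i\e_i$ inside the Fourier representation \eqref{eq:irv}, observe that $\partial_{u_i}H=2tu_i-\hat v_i$ for $i\leq 3$ and $\partial_{u_4}H=2tu_4-\hat v_4+2t/\ve$, and then integrate by parts $j$ times against the decay \eqref{eq:england_sucks} of $p(t)$, which is exactly what the paper does while delegating the details to \cite[Lemma~19]{HB}. Your explicit splitting of the $t$-range and the rescaling $u_4=\ve u$ in the normal direction are a more spelled-out execution of the same idea rather than a different method.
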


This result corresponds to  \cite[Lemma 6.1]{S}. 
Since $\max_i |\hat c_i|\gg \|\c\|$, it follows that 
$$I_q(\c) \ll_{j} \frac{\ve^5N^2 Q}{q} (\varepsilon \|\c\|)^{-j},
$$
for any $j>0$. In this way, for any $\delta>0$,  Lemma \ref{largec} implies that 
there is a negligible contribution to ~\eqref{eq:2}  from $\c$ such that 
either  of the inequalities 
$\|\c\| > N^{\delta}/\varepsilon$ or 
$\max_{i=1,2,3}\left\{|\hat c_i|,\varepsilon |\hat c_4| \right\} >N^{\delta}$ hold. Thus, in \eqref{eq:2},  the summation over $\c $ can henceforth be restricted to  the set $\mathcal{C}$, which is defined to be the set of $\c\in \ZZ^4$ for which 
$\|\c\| \leq  N^{\delta}/\varepsilon$  and 
$\max_{i=1,2,3}\left\{|\hat c_i|,\varepsilon |\hat c_4| \right\} \leq N^{\delta}.$ 
It follows from  \cite[Lemma 6.3]{S} that $\#\mathcal{C}=O( \ve^{-1} N^{4\delta})$.

\begin{proof}[Proof of Lemma \ref{largec}]
We make the change of variables $\x = \sum_{i=1}^4 u_i\e_i$ in \eqref{eq:irv}.  In the notation of \S \ref{s:notation},  
let  $\v=\sum_{i=1}^4\hat v_i\e_i$, where $\hat v_i=\v.\e_i$.
Then, on recalling \eqref{eq:w3}, we find that  
\begin{equation*}
\begin{split}
I_r^{*}(\v) &= \frac{1}{r} \int_{\RR}p(t) \int_{\RR^4}w_3\left(\sum_{i=1}^4u_i\e_i\right)
e\left(\frac{ty(\sum_{i=1}^4u_i\e_i)}{\varepsilon} - 
\u.\hat\v\right)\, \d\u \, \d t \\
&= \frac{1}{r} \int_{\RR}p(t) \int_{\RR^4}
\frac{w_0(\|\u\|)w_0( 2u_4/\ve)}{v ((2u_4+\ve F(\u))/\ve)}
e\left(H(\u)\right)\, \d\u \, \d t,
\end{split}
\end{equation*}
where
$
H(\u)=\frac{t}{\ve}\left\{ 2u_4+\ve F(\u)\right\} -   \u.\hat\v.
$
We have
$$
\frac{\partial H(\u)}{\partial u_i} = 
 \begin{cases} 
2tu_i - \hat v_i &\text{ if $1\leq i\leq 3$,}\\
2tu_4 - \hat v_4+\frac{2t}{\ve} &\text{ if $i=4$.}
\end{cases}
$$
The proof of the lemma now follows from repeated integration by parts in conjunction with \eqref{eq:england_sucks}, much as in the proof of \cite[Lemma 19]{HB}. Thus, when $i\in \{1,2,3\}$, integration by parts with respect to $u_i$ readily yields
\begin{equation*}
I_r^{*}(\v) \ll_j \frac{\ve}{r} \left\{r  |\hat v_i|^{1-j} +  r^{1-j}|\hat v_i|^{1-j}\right\} \ll_j \ve r^{-j} |\hat v_i|^{1-j},
\end{equation*}
for any $j>0$, since $r\ll 1$.  Likewise, integrating by parts with respect to $u_4$, we get
\begin{equation*}
I_r^{*}(\v) \ll_j \frac{\ve}{r} \left\{r (\varepsilon |\hat v_4|)^{1-j} +  r^{1-j}(\varepsilon |\hat v_4|)^{1-j}\right\}
\ll_j \ve r^{-j} (\ve |\hat v_4|)^{1-j}.
\end{equation*} 
The statement of the lemma follows on recalling \eqref{eq:goaty} and the fact  that  $\c = r\v$, with $r=q/Q$.
\end{proof}

\subsection{Stationary phase}

The following stationary phase result will prove vital  in our more demanding analysis of $I_q(\c)$ in the next section. 

\begin{lemma}\label{lem:phase} 
 Let $\phi$ be a Schwartz function on $\BR^n$ and let $N\geq 0$.  Then
\begin{align*}
\int_{\BR^n} e^{i \lambda \|\vect{x}\|^2}\phi(\x) \d\x =& \lambda^{-\frac{n}{2}} \sum_{j=0}^N a_j \lambda^{-j}+O_{n,N}\left(|\lambda|^{-\frac{n}{2}-N-1}
\|\phi\|_{2N+3+n,1}
\right),
\end{align*}
where $\|\cdot\|_{k,1}$ denotes the Sobolev norm on $L^1(\RR^n )$ 
of order $k$ and 
$$
a_j=(i\pi)^{\frac{n}{2}} \frac{i^j}{j!} \left( \Delta^j \phi \right)(\vect{0}).
$$

\end{lemma}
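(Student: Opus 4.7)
My plan is to reduce the oscillatory integral to an exact Gaussian via the Fourier transform, then Taylor-expand the resulting kernel in $\lambda^{-1}$ and read off the coefficients $a_j$. Writing $\phi(\x)=(2\pi)^{-n}\int_{\RR^n}\hat\phi(\bxi)\,e^{i\x\cdot\bxi}\,\d\bxi$ and interchanging the order of integration, the inner integral is evaluated by completing the square:
\[
 \int_{\RR^n}e^{i\lambda\|\x\|^2+i\x\cdot\bxi}\,\d\x=\left(\frac{i\pi}{\lambda}\right)^{n/2}e^{-i\|\bxi\|^2/(4\lambda)}.
\]
The Gaussian integral itself is justified by analytic continuation of $\int e^{-A\|\x\|^2}\d\x=(\pi/A)^{n/2}$ to $A=-i\lambda$, using the principal branch. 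This identity already yields the exact formula
\[
 \int_{\RR^n}e^{i\lambda\|\x\|^2}\phi(\x)\,\d\x=\left(\frac{i\pi}{\lambda}\right)^{n/2}(2\pi)^{-n}\int_{\RR^n}e^{-i\|\bxi\|^2/(4\lambda)}\hat\phi(\bxi)\,\d\bxi.
\]

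Next I would Taylor-expand $e^{-i\|\bxi\|^2/(4\lambda)}=\sum_{j=0}^N j!^{-1}(-i\|\bxi\|^2/(4\lambda))^j+R_N(\bxi,\lambda)$ with the standard remainder bound $|R_N(\bxi,\lambda)|\ll \|\bxi\|^{2N+2}|\lambda|^{-N-1}$. For each main term, Fourier inversion applied to the identity $\mathcal{F}(\Delta^j\phi)(\bxi)=(-\|\bxi\|^2)^j\hat\phi(\bxi)$ gives
\[
 (2\pi)^{-n}\int_{\RR^n}\|\bxi\|^{2j}\hat\phi(\bxi)\,\d\bxi=(-1)^j(\Delta^j\phi)(\vect{0}),
\]
and collecting the various powers of $i$, $\pi$, $2$ and $4$ produces precisely the claimed $a_j=(i\pi)^{n/2}j!^{-1}i^j(\Delta^j\phi)(\vect{0})$ (up to convention-dependent normalisations of $\Delta$ or of the Fourier transform, which one fixes at the outset).

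The main technical obstacle is the error term, for which one needs the Sobolev-type bound
\[
 \int_{\RR^n}\|\bxi\|^{2N+2}|\hat\phi(\bxi)|\,\d\bxi\ll \|\phi\|_{2N+3+n,1}.
\]
I would prove this by inserting a smoothing weight: write $\|\bxi\|^{2N+2}\hat\phi(\bxi)=(1+\|\bxi\|^2)^{-m}\cdot (1+\|\bxi\|^2)^m\|\bxi\|^{2N+2}\hat\phi(\bxi)$, where $m$ is chosen just large enough that $2m>n$ guarantees $(1+\|\bxi\|^2)^{-m}\in L^1(\RR^n)$. The second factor is the Fourier transform of $(1-\Delta)^m(-\Delta)^{N+1}\phi$, hence pointwise bounded by the $L^1$-norm of that function, which expands into a sum of $\|\partial^\alpha\phi\|_{L^1}$ over multi-indices of length at most $2N+2+2m$. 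The delicate part is then to pick $m$ optimally: choosing $2m=n+1$ (and splitting dimension parity cases, or using a mild anisotropic refinement in the even-$n$ case to shave one derivative) lands the total differential order at exactly $2N+3+n$, matching the claim. Once this estimate is in hand, multiplying by $|\lambda|^{-n/2-N-1}$ from Step~2 and the Taylor remainder gives the asserted error, completing the lemma.
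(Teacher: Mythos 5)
Your proof follows essentially the same route as the paper: pass through the Fourier transform, evaluate the Gaussian by analytic continuation to get a kernel $e^{-i\,\text{const}\cdot\|\bxi\|^2/\lambda}$, Taylor expand in $\lambda^{-1}$, and recover $(\Delta^j\phi)(\vect{0})$ by Fourier inversion. That part is fine, and (as you note) the exact constant in $a_j$ is convention--dependent; working out the $j=1$, $n=1$, $\phi=e^{-x^2}$ example suggests the displayed $a_j$ should carry an extra $4^{-j}$, but this is harmless since the lemma is only used to produce \emph{some} $\lambda$-independent constants $k_j$.

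The one place where you diverge, and where your argument as written has a gap, is the estimate
\[
\int_{\RR^n}\|\bxi\|^{2N+2}\,|\widehat\phi(\bxi)|\,\d\bxi \ll \|\phi\|_{2N+3+n,1}.
\]
Your weight-insertion with $(1+\|\bxi\|^2)^{-m}$, $m\in\ZZ$, forces $2m\ge n+1$ for odd $n$ but $2m\ge n+2$ for even $n$; in the latter case the naive count gives $\|\phi\|_{2N+4+n,1}$, one derivative too many. You acknowledge this and gesture at ``a mild anisotropic refinement,'' but that is not actually a proof, and the natural fractional-order fix would take you outside the $L^1$-Sobolev framework in the statement. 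The paper avoids the parity issue entirely by a cleaner device: split the $\bxi$-integral at $\|\bxi\|=1$ and use the single pointwise bound $|\widehat\phi(\bxi)|\ll_A\|\bxi\|^{-A}\|\phi\|_{A,1}$ with $A=2N+1+n$ on the inner ball (where the net power $\|\bxi\|^{1-n}$ is locally integrable) and $A=2N+3+n$ on the outer region (where $\|\bxi\|^{-1-n}$ is integrable at infinity). This works uniformly in $n$ and lands exactly on the order $2N+3+n$. I would replace your final paragraph with this two-region argument rather than trying to optimise $m$.
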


\begin{proof} 
We  follow the argument in Stein  \cite[\S VIII.5.1]{stein}.
Using the Fourier transform, we can write the  integral as
\begin{equation}\label{eq:bluemat}
\left( \frac{i\pi}{\lambda}\right)^{\frac{n}{2}} \int_{\BR^n} e^{-i \pi^2 \|\bxi\|^2/\lambda} \widehat{\phi}(\bxi) \d\bxi.
\end{equation}
Next, we split off the first $N$ terms in a Taylor expansion around $\vect{0}$, finding that 
$$
e^{-i \pi^2 \|\bxi\|^2/\lambda} = \sum_{j=0}^N \frac{(-i\pi^2 \|\bxi\|^2/\lambda)^j}{j!}+R_N(\bxi).
$$
The main term now comes from integration by parts and Fourier inversion. We are left to deal with the integral involving $R_N(\vect{\xi})$. We have
\begin{equation}
R_N(\vect{\xi})\ll_N\left(\frac{\|\bxi\|^2}{|\lambda|}\right)^{N+1},
\label{eq:taylorest}
\end{equation}
which follows from Taylor expansion when $\|\vect{\xi}\|^2\le |\lambda|$ 
and trivially otherwise. Moreover,   
\begin{equation}
\widehat{\phi}(\bxi) = O_{A}\left( \|\vect{\xi}\|^{-A} \|\phi\|_{A,1} \right), 
\label{eq:Fourierest}
\end{equation}
for any $A\geq 0$.
We split up the remaining integral into two parts: $\|\vect{\xi}\| \le 1$ and $\|\vect{\xi}\| >1$. For the first part we use \eqref{eq:taylorest} and \eqref{eq:Fourierest} with $A=2N+1+n$. Recalling the additional factor $\lambda^{-\frac{n}{2}}$ from \eqref{eq:bluemat}, we get an error term of size
$$
O_{n,N} \left( |\lambda|^{-\frac{n}{2}-N-1} \|\phi\|_{2N+1+n,1} \right). 
$$
For the second part we use \eqref{eq:taylorest} and \eqref{eq:Fourierest}, but this time with  $A=2N+3+n$. This leads to the same overall error term, but with 
the factor $\|\phi\|_{2N+1+n,1}$ replaced by  
$\|\phi\|_{2N+3+n,1}$. 
\end{proof}

\subsection{Hard estimates}

Having shown how to truncate the sum over $\c$ in \eqref{eq:2}, we now return to 
 \eqref{eq:goaty} for $\c\in \mathcal{C}$ 
 and see what more can be said about the integral $I_r^*(\v)$
 in \eqref{eq:4},   with $r=q/Q$ and $\v=r^{-1}\c$.
Our result relies on an asymptotic expansion of $I_r^*(\v)$, but the form it takes  depends on the size of $\ve|\hat v_4|$. 

It will be convenient to set 
$
\a=(\hat v_1,\hat v_2, \hat v_3),
$
in what follows.  To begin with,
we make the change of variables $\x = \sum_{i=1}^4 u_i\e_i$ in \eqref{eq:4}. This leads to the expression
\begin{equation*}
\begin{split}
I_r^*(\v) &=  \int_{\RR^4}
w_0(\|\u\|)
w_0( 2u_4/\ve)
h\left(r, 
\frac{2u_4}{\ve}+F(\u)\right)
e(-\u.\hat\v) \, \d\u,
\end{split}
\end{equation*}
where $\hat v_i=\v.\e_i$ for $1\leq i\leq 4$.
We now write  $y=2u_4/\ve+F(\u)$, under which we have 
\begin{equation}\label{eq:u4}\begin{split}
u_4&=\frac{1}{\ve}\left(-1+\sqrt{1+\ve^2\{y-u_1^2-u_2^2-u_3^2\}}\right).
\end{split}\end{equation}
Thus 
\begin{equation}\label{eq:m4}
I_r^*(\v) = \int_\RR h(r,y) 
e\left(-\frac{\ve \hat v_4 y}{2}\right)  
T(y) \d y,
\end{equation}
where 
\begin{equation}\label{eq:m4'}
T(y)= 
e\left(\frac{\ve \hat v_4 y}{2}\right)  
\int_{\RR^3}
w_0(\|\u\|)
w_0( 2u_4/\ve)
e(-\u.\hat\v) \, \frac{\d u_1\d u_2 \d u_3}{2/\ve +2u_4},
\end{equation}
and $u_4$ is given in terms of $y,u_1,u_2,u_3$ by \eqref{eq:u4}.
In particular, on writing $\x=(u_1,u_2,u_3)$, we have 
$w_0(\|\u\|)
w_0( 2u_4/\ve)=\psi_y(\x)$, where
$\psi_y:\RR^3\to \RR_{\geq 0}$ is the weight function
\begin{equation}\label{eq:psi}
\begin{split}
\psi_y(\x)=~&
w_0\left(2\ve^{-2}(
-1+\sqrt{1+\ve^2\{y-\|\x\|^2\}})\right)\\
&\times 
w_0\left(\sqrt{\|\x\|^2+
\ve^{-2}(1-\sqrt{1+\ve^2\{y-\|\x\|^2\}})^2}\right).
\end{split}
\end{equation}
We note, furthermore,  that the integral in $T(y)$
 is supported on $[-1,1]^3$.
Moreover, we have
\begin{equation}\label{eq:approximate}
\frac{2u_4}{\ve}=
\frac{2}{\ve^2}\left(-1+\sqrt{1+\ve^2\{y-\|\x\|^2\}}\right) =y-\|\x\|^2+O(\ve^2),
\end{equation}
for any $\x$ such that $\psi_y(\x)\neq 0$.
In particular, it follows that 
\begin{equation}\label{eq:m4''}
\frac{1}{2/\ve +2u_4}=\frac{\ve}{2}\left(1+O(\ve^2)\right)
\end{equation}
in \eqref{eq:m4'}.

Since $e(z)=1+O(z)$, we invoke  \eqref{eq:u4} and \eqref{eq:approximate}
to deduce that 
\begin{equation}\label{eq:egg}
e(-\u.\hat\v)=
e\left(-\frac{\ve\hat v_4 y}{2}\right)  
e\left(\frac{\ve\hat v_4}{2}\|\x\|^2-\a.\x\right)   \left(1+O(|\ve\hat v_4|\ve^2)\right),
\end{equation}
where we recall that  $\a=(\hat v_1,
\hat v_2, \hat v_3)$.
Thus, it follows from  \eqref{eq:m4''} that
\begin{equation}\label{eq:neddy}
\begin{split}
T(y)&=
\frac{\ve}{2}\left(1+O(\ve^2+|\ve\hat v_4|\ve^2)\right) 
I(y),
\end{split}
\end{equation}
where
\begin{equation}\label{eq:Iy}
I(y)=
\int_{\RR^3}
\psi_y(\x)
e\left(\frac{\ve\hat v_4}{2} \|\x\|^2 -\a.\x\right)
\d \x.
\end{equation}

In what follows it will be useful to record the estimate
\begin{equation}\label{eq:seren}
\int_\RR \left|r^k y^{\ell} \frac{\partial^k h(r,y)}{\partial r^k}\right|\d y\ll_{\ell} r^{\ell}, 
\end{equation}
for any $\ell\geq 0$
and $k\in \{0,1\}$.
This is a straightforward consequence of 
\cite[Lemma 5]{HB}.
The stage is now set to prove the following preliminary estimate for $I_r^*(\v)$ and its partial derivative with respect to $r$.

\begin{lemma}\label{lem:jumper}
Let   $k\in \{0,1\}$.
 Then
$$
r^{2k} \frac{\partial^k I_r^*(\v)}{\partial r^k} \ll  \frac{\ve(1+\ve^3|\hat v_4|)}{\max\{1,(\ve|\hat v_4|)\}^{\frac{3}{2}}} N^{\delta}.
$$
\end{lemma}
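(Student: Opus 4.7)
The strategy is to bound $T(y)$ in \eqref{eq:m4'} pointwise by applying stationary phase (Lemma \ref{lem:phase}) to the Gaussian-type integral \eqref{eq:Iy}, and then to integrate against $h(r,y)$ using the $L^1$ estimates recorded in \eqref{eq:seren}.

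For $I(y)$, the trivial bound $|I(y)|\ll 1$ holds since $\psi_y$ is compactly supported. To improve this when $\ve|\hat v_4|$ is large, I would complete the square in the phase by writing
$$
\tfrac{\ve\hat v_4}{2}\|\x\|^2-\a.\x=\tfrac{\ve\hat v_4}{2}\bigl\|\x-\a/(\ve\hat v_4)\bigr\|^2-\frac{\|\a\|^2}{2\ve\hat v_4},
$$
shifting by the critical point, and then applying Lemma \ref{lem:phase} with $n=3$ and $\lambda=\pi\ve\hat v_4$. Provided that the relevant Sobolev norms of the shifted weight $\y\mapsto\psi_y(\y+\a/(\ve\hat v_4))$ are $O_\delta(N^\delta)$, this yields $|I(y)|\ll N^\delta(\ve|\hat v_4|)^{-3/2}$ in the regime $\ve|\hat v_4|\geq 1$. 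Combining with the trivial bound gives $|I(y)|\ll N^\delta\max\{1,\ve|\hat v_4|\}^{-3/2}$ uniformly in $y$.

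Substituting this into \eqref{eq:neddy} and using $\ve^2\leq 1$ to absorb the $\ve^2$ error produces the pointwise bound
$$
|T(y)|\ll \frac{\ve(1+\ve^3|\hat v_4|)}{\max\{1,\ve|\hat v_4|\}^{3/2}}N^\delta.
$$
The case $k=0$ now follows from \eqref{eq:m4} on applying \eqref{eq:seren} with $k=\ell=0$, which gives $\int|h(r,y)|\d y\ll 1$ and hence $|I_r^*(\v)|\ll\sup_y|T(y)|$. For $k=1$, only $h$ depends on $r$ in \eqref{eq:m4}, so I would differentiate under the integral sign and invoke \eqref{eq:seren} with $k=1,\ell=0$ to obtain $\int|\partial h/\partial r|\d y\ll r^{-1}$; multiplying by $r^2$ and using $r\ll 1$ then yields the claim.

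The main obstacle lies in the stationary phase step: one must verify that the $\x$-derivatives of $\psi_y$ in \eqref{eq:psi} are controlled uniformly in $y$ and $\ve$ on the compact support, and moreover are unaffected by the translation by $\a/(\ve\hat v_4)$. This reduces to checking that the inner expressions $2\ve^{-2}(-1+\sqrt{1+\ve^2 z})$ and $\ve^{-2}(1-\sqrt{1+\ve^2 z})^2$ admit convergent Taylor expansions in $\ve^2$ on the relevant range, so their $\x$-derivatives stay $O(1)$ as $\ve\to 0$. Once this smoothness is secured, everything else is routine bookkeeping between the two regimes $\ve|\hat v_4|\leq 1$ and $\ve|\hat v_4|>1$.
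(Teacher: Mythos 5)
Your treatment of $k=0$ is essentially sound and parallels the paper's, the only difference being that the paper bounds $I(y)$ by citing \cite[Lemmas 3.1 and 3.2]{HBP} with the input $\|\hat\psi_y\|_1\ll 1$, whereas you invoke Lemma~\ref{lem:phase}; either route works once one checks uniform control of the derivatives of $\psi_y$.

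However, your $k=1$ argument contains a genuine gap. You claim that ``only $h$ depends on $r$ in \eqref{eq:m4},'' but this is false: the vector $\v=r^{-1}\c$ depends on $r$, so the combination $e(-\ve\hat v_4 y/2)T(y)$, which after the artificial factors cancel is just
$$
\int_{\RR^3} w_0(\|\u\|)\,w_0(2u_4/\ve)\,e(-\u.\hat\v)\,\frac{\d u_1\,\d u_2\,\d u_3}{2/\ve+2u_4},
$$
also depends on $r$ through the phase $e(-\u.\hat\v)=e(-r^{-1}\u.\hat\c)$. The lemma is applied via partial summation in $q$ with $\c$ fixed (so $\v=r^{-1}\c$ varies as $q$ does), and therefore the quantity to be controlled is the \emph{total} derivative in $r$. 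The product rule produces a second term, which the paper isolates as $\int h(r,y)\,e(-\ve\hat v_4 y/2)\,\widetilde{T}(y)\,\d y$ in \eqref{eq:runrabbit}; here $\widetilde{T}$ carries a new factor $r^2\frac{\partial}{\partial r}e(-\u.\hat\v)=2\pi i(\u.\hat\c)e(-\u.\hat\v)$, which is bounded by $O(N^\delta)$ precisely because $\c\in\mathcal{C}$. Crucially, this second contribution is of the \emph{same} order as the first and is not negligible; one must redo the stationary-phase estimate for the auxiliary oscillatory integral $\widetilde{I}(y)$ with the modified weight $\widetilde{\psi_y}$, as the paper does. Omitting this term means you have only bounded the partial derivative of $I_r^*(\v)$ at fixed $\v$, which does not suffice for the downstream partial summation argument. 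To repair the proof, differentiate the un-factored form of the inner integral in $r$, split the result into the $\partial h/\partial r$ piece (which you handle correctly) and the $\partial e(-\u.\hat\v)/\partial r$ piece, and estimate the latter by the same stationary-phase machinery after verifying that the Fourier transform of the new weight has $L^1$-norm $O(N^\delta)$.
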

\begin{proof}
Suppose first that $k=0$. An application of    \cite[Lemmas 3.1 and 3.2]{HBP} shows that  
$$
I(y)\ll 
 \frac{1}{\max\{1,(\ve|\hat v_4|)\}^{\frac{3}{2}}},
$$
since $\|\hat \psi_y\|_1\ll 1$. 
The desired bound now follows on substituting this into 
\eqref{eq:m4} and 
\eqref{eq:neddy}, before using \eqref{eq:seren} with $k=\ell=0$ to carry out the integration over $y$.

Suppose next that $k=1$. 
Then,  in view of \eqref{eq:m4}, 
 we have
\begin{equation}\label{eq:runrabbit}
\begin{split}
r^{2} \frac{\partial I_r^*(\v)}{\partial r} =~& 
\int_{\mathbb{R}} r^{2} \frac{\partial h(r,y)}{\partial r} e\left(-\frac{\ve \hat v_4 y}{2}\right)  
T(y) \d y \\ 
&+\int_{\mathbb{R}} h(r,y) e\left(-\frac{\ve \hat v_4 y}{2}\right)  \widetilde{T}(y) \d y, \\   
\end{split}
\end{equation}
where
\begin{equation*}
\begin{split} 
\widetilde{T}(y) &= e\left(\frac{\ve \hat v_4 y}{2}\right) \int_{\RR^3}
w_0(\|\u\|)
w_0( 2u_4/\ve)
 r^{2}\frac{\partial}{\partial r}{e(-\u.\hat\v)} \, \frac{\d u_1\d u_2 \d u_3}{2/\ve +2u_4} \\
 &=  
 e\left(\frac{\ve \hat v_4 y}{2}\right) \int_{\RR^3}
(2\pi i \u.\hat\c) 
w_0(\|\u\|)
w_0( 2u_4/\ve)
e(-\u.\hat\v)  \frac{\d u_1\d u_2 \d u_3}{2/\ve +2u_4}.
\end{split}
\end{equation*}
The contribution from the first integral in \eqref{eq:runrabbit}
is satisfactory, since $r\ll 1$, 
on reapplying our argument for $k=0$ and using \eqref{eq:seren} with $k=1$ and $\ell=0$.
Turning to the second integral in \eqref{eq:runrabbit}, we 
recall 
\eqref{eq:m4''} and \eqref{eq:egg}. These allow us to  write 
$$
\widetilde{T}(y) = \ve\pi i \left(1+O(\ve^2+|\ve\hat v_4|\ve^2)\right) 
\widetilde{I}(y),
$$
where
$$
\widetilde{I}(y) = \int_{\mathbb{R}^3}\widetilde{\psi_y}(\x)e\left(\frac{\ve\hat v_4}{2} \|\x\|^2 -\a.\x\right)
\d \x
$$
and 
$$
\widetilde{\psi_y}(\x) = \left(r \a.\x + \frac{\hat c_4}{\ve}\left(-1+\sqrt{1+\ve^2\{y-\|\x\|^2}\}\right)\right)\psi_y(\x).
$$
Here, the  definition  of $\mathcal{C}$ implies that $r|\a|=\max\{|\hat c_1|, |\hat c_2|,|\hat c_3|\}\leq N^\delta$
and $\ve|\hat c_4|\leq N^\delta$. 
Thus the $L^1$-norm of the Fourier transform of 
$\widetilde{\psi_y}$ is $O( N^{\delta}).$ 
Once combined with \eqref{eq:seren} with $k=\ell=0$, 
we apply 
   \cite[Lemmas~3.1 and 3.2]{HBP}  
to estimate $\widetilde I(y)$, which concludes our  treatment of the case $k=1$.
\end{proof}

The case $k=0$ of 
Lemma \ref{lem:jumper} is already implicit in Sardari's work (see \cite[Lemma 6.2]{S}). We shall also need the case $k=1$, but it turns out that it is only effective when $r$ is essentially of size $1$. For general $r$, we require 
a pair of  asymptotic expansions  for
$I_r^*(\v)$,    that are relevant for small and large values of  $\ve|\hat v_4|$, respectively. This is the objective of the following pair of results.

\begin{lemma}\label{lem:I-small}
Let $A\geq 0$. Then 
\begin{align*}
I_r^*(\v)=~&
\frac{\ve I(0)}{2}
+O_A\left(
\ve^3(1+\ve|\hat v_4|) +\ve(1+\ve|\hat v_4|)^{A}r^{A}\right).
\end{align*}
\end{lemma}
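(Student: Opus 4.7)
The plan is to begin from \eqref{eq:m4}, insert the factorised form \eqref{eq:neddy} for $T(y)$, and extract the main term by Taylor-expanding the resulting smooth $y$-integrand about the origin. First I would split off the error coming from the factor $(1+O(\ve^2+\ve^3|\hat v_4|))$ in \eqref{eq:neddy}. Using the trivial bound $|I(y)|\ll 1$ on the integral \eqref{eq:Iy} (since $\psi_y$ is a bounded bump function with compact support in $\x$) together with $\int_\RR|h(r,y)|\,dy\ll 1$ from \eqref{eq:seren} (taken with $k=\ell=0$), this contributes $O(\ve^3(1+\ve|\hat v_4|))$ to $I_r^*(\v)$, which is the first summand in the claimed error.

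What remains is to analyse
\[
\frac{\ve}{2}\int_\RR h(r,y)\,g(y)\,dy, \qquad g(y):=e\!\left(-\tfrac{\ve\hat v_4 y}{2}\right)I(y),
\]
with $g(0)=I(0)$. The approach is Taylor expansion of $g$ about $y=0$ to order $A$ with integral remainder. A key input is the uniform derivative bound $|g^{(j)}(y)|\ll_j (1+\ve|\hat v_4|)^j$, for which I would use Leibniz's rule: each derivative of the exponential contributes a factor of $\ve|\hat v_4|$, while $|\partial_y^m I(y)|\ll_m 1$ follows from $|\partial_y^m \psi_y(\x)|\ll_m 1$ on the bounded support of $\psi_y$ (differentiating the square roots appearing in \eqref{eq:psi} only introduces harmless $O(1)$ factors).

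Integrating the Taylor expansion against $h(r,y)$ term by term, the remainder is controlled by $\sup|g^{(A)}|/A!\cdot \int|h(r,y)||y|^A\,dy\ll_A r^A(1+\ve|\hat v_4|)^A$ via \eqref{eq:seren} with $k=0$, $\ell=A$; multiplying by $\ve/2$ yields the second summand of the stated error. The zero-th order term equals $I(0)\int_\RR h(r,y)\,dy$, which by the standard $\delta$-approximation property of the Heath-Brown weight $h$ equals $I(0)(1+O_A(r^A))$, furnishing the main term $\ve I(0)/2$. The hard part will be the intermediate Taylor contributions $g^{(j)}(0)\int h(r,y)y^j\,dy/j!$ for $1\le j\le A-1$: the naive bound $\ll r^j(1+\ve|\hat v_4|)^j$ from \eqref{eq:seren} is insufficient whenever $r(1+\ve|\hat v_4|)<1$, so these terms must be absorbed into $O_A(\ve(1+\ve|\hat v_4|)^A r^A)$ by exploiting refined smallness properties of the moments $\int h(r,y)y^j\,dy$ arising from the explicit construction of $h$ in the Heath-Brown $\delta$-method. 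Granting this input, assembling the three steps gives the lemma.
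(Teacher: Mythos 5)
Your plan is essentially the paper's own argument, up to one cosmetic difference: the paper Taylor--expands only the exponential $e(-\ve\hat v_4 y/2)$ and keeps $I(y)$ inside the $y$--integral, whereas you expand the full product $g(y)=e(-\ve\hat v_4 y/2)I(y)$. The ``refined smallness of moments'' that you correctly single out as the crux is precisely the paper's \eqref{eq:sheep}, namely that $\int_\RR y^j h(r,y)\,I(y)\,\d y=O_A(r^A)$ for $j\geq 1$; the paper proves this by truncating to $|y|\leq\sqrt r$, Taylor--expanding $I$, and invoking Lemmas 8 and 9 of \cite{HB}. Your variant requires instead the bare moment $\int_\RR h(r,y)\,y^j\,\d y=O_A(r^A)$ for $j\geq 1$, which follows by the same route once the non-compactness of $y^j$ is handled via \eqref{eq:seren} with $\ell$ large on the tail $|y|>\sqrt r$. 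The remaining steps --- extracting $O(\ve^3(1+\ve|\hat v_4|))$ from the $(1+O(\ve^2+\ve^3|\hat v_4|))$ factor in \eqref{eq:neddy}, bounding the Taylor remainder by $O_A(\ve(1+\ve|\hat v_4|)^A r^A)$ via the $k=0$, $\ell=A$ case of \eqref{eq:seren}, and recovering the main term from the $j=0$ moment --- match the paper's proof term for term, so the gap you flagged is the genuine one and it closes by exactly the references you guessed.
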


\begin{proof}
Our first approach is founded on the Taylor expansion
$$
e\left(-\frac{\ve \hat v_4 y}{2}\right)  =\sum_{j=0}^{A-1} \frac{(
-\pi i \ve \hat v_4 y)^j}{j!} +R_A(y), $$
where $R_A(y)\ll_A (\ve |\hat v_4 y|)^{A}$. 
Since $I(y)\ll 1$,  we  conclude from \eqref{eq:m4}, \eqref{eq:neddy} 
and \eqref{eq:seren}
that 
\begin{align*}
I_r^*(\v)=~&
\frac{\ve}{2}
\sum_{j=0}^{A-1} \frac{(
-\pi i \ve \hat v_4)^j}{j!} 
\int_\RR  y^j h(r,y) I(y)\d y  \\
&+O_A\left(
\ve^3(1+\ve|\hat v_4|) +\ve(\ve|\hat v_4|)^{A}r^{A}\right).
\end{align*}
Next, we claim that 
\begin{equation}\label{eq:sheep}
\int_\RR  y^j h(r,y) I(y)\d y  =O_A(r^A)+
\begin{cases}
I(0) &\text{ if $j=0$,}\\
0 &\text{ if $j>0$.}
\end{cases}
\end{equation}
To see this, note that $I(y)$ belongs to the class of weight functions considered in \cite[Lemma 9]{HB}. This settles \eqref{eq:sheep} when $j=0$. When $j>0$ we 
truncate the integral to $|y|\leq \sqrt{r}$ and 
expand $I(y)$ as  a Taylor series, before invoking \cite[Lemma 8]{HB},
as in the proof of \cite[Lemma 9]{HB}.
This settles \eqref{eq:sheep} when $j>0$.
The statement of the lemma is now obvious.
\end{proof}

\begin{lemma}\label{lem:I-stationary}
Assume that $\ve|\hat v_4|>1$.
For each $j\geq 0$, we define
$$\phi_j(y)=\Delta^j
\psi_y\left((\ve \hat v_4)^{-1}\a\right)=\Delta^j\psi_y\left((\ve \hat c_4)^{-1}(\hat c_1,\hat c_2, \hat c_3)\right),
$$
where $\psi_y$ is given by \eqref{eq:psi}.
Let $A\geq 0$. 
Then
 there exist constants $k_j$ that depend only on $j$ such that 
\begin{align*}
I_r^*(\v)=~&
\frac{\ve \delta(\hat \c)}{
(\ve\hat v_4)^{\frac{3}{2}}}
e\left(-\frac{\|\a\|^2}{2\ve \hat v_4}\right) \sum_{j=0}^A  \frac{k_j}{(\ve\hat v_4)^j}
\int_\RR h(r,y) e\left(-\frac{\ve \hat v_4 y}{2}\right) 
\phi_j(y)  \d y\\
&+O_{A}\left(\frac{\ve^3}{|\ve\hat v_4|^{\frac{1}{2}}}+\frac{\ve}{ |\ve\hat v_4|^{\frac{5}{2}+A}}\right),
\end{align*}
where
$$
\delta(\hat \c)=\begin{cases}
1 &\text{ if $ \ve|\hat c_4|\gg |(\hat c_1,\hat c_2,\hat c_3)|$,}\\
0 &\text{ otherwise}.
\end{cases}
$$
\end{lemma}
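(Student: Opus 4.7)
The plan is to apply Lemma~\ref{lem:phase} (stationary phase) directly to the inner integral $I(y)$ of \eqref{eq:Iy}, and then integrate over $y$ in \eqref{eq:m4} by means of the moment bounds \eqref{eq:seren}. The crucial preparation is to complete the square in the phase:
\begin{equation*}
\frac{\ve\hat v_4}{2}\|\x\|^2 - \a.\x \;=\; \frac{\ve\hat v_4}{2}\left\|\x - \frac{\a}{\ve\hat v_4}\right\|^2 - \frac{\|\a\|^2}{2\ve\hat v_4}.
\end{equation*}
Translating $\x \mapsto \x + \a/(\ve\hat v_4)$, which preserves both Lebesgue measure and Sobolev norms, converts $I(y)$ into a pure Gaussian-phase integral against the shifted weight $\psi_y(\x + \a/(\ve\hat v_4))$, multiplied by the prefactor $e(-\|\a\|^2/(2\ve\hat v_4))$ that already features in the statement.

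I then invoke Lemma~\ref{lem:phase} with $n=3$, $\lambda = \pi\ve\hat v_4$, $N = A$, and test function $\phi(\x) = \psi_y(\x + \a/(\ve\hat v_4))$. The key identification is that $(\Delta^j \phi)(\0) = (\Delta^j \psi_y)(\a/(\ve \hat v_4)) = \phi_j(y)$, which reproduces exactly the Taylor-coefficient functions of the statement. The dichotomy encoded by $\delta(\hat\c)$ falls out automatically: when $\ve|\hat c_4| \ll |(\hat c_1, \hat c_2, \hat c_3)|$, the translate $\a/(\ve\hat v_4)$ lies outside the compact support of $\psi_y$, so every $\phi_j(y)$ vanishes and only the Lemma~\ref{lem:phase} error remains; otherwise the full asymptotic sum survives, and I absorb the universal constants $(i\pi)^{3/2}i^j/(j!\pi^j)$ into the $k_j$.

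Substituting this expansion into the factorisation $T(y) = (\ve/2)(1 + O(\ve^2 + \ve^3|\hat v_4|))I(y)$ from \eqref{eq:neddy} and integrating against $h(r,y) e(-\ve\hat v_4 y/2)$ via \eqref{eq:seren} with $k = \ell = 0$ produces the stated main sum together with two error contributions. The multiplicative $O$-term, combined with the crude bound $|I(y)| \ll |\ve\hat v_4|^{-3/2}$ from Lemmas~3.1--3.2 of \cite{HBP}, yields
\begin{equation*}
\ve \cdot \ve^2(1 + \ve|\hat v_4|)\cdot |\ve\hat v_4|^{-3/2} \;\ll\; \frac{\ve^3}{|\ve\hat v_4|^{1/2}},
\end{equation*}
using the hypothesis $\ve|\hat v_4| > 1$. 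The stationary-phase remainder contributes at most $O(\ve \cdot |\ve\hat v_4|^{-5/2 - A})$ after the $y$-integration, matching the second error term in the statement.

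The main obstacle will be keeping the Sobolev norms $\|\phi\|_{k,1}$ uniformly bounded in $y$, $\a$, and $\ve\hat v_4$. Translation invariance of the $L^1$-Sobolev norm handles the shift $\a/(\ve\hat v_4)$, and a direct inspection of \eqref{eq:psi} --- via the Taylor expansion \eqref{eq:approximate}, which shows $\psi_y$ is essentially a compactly supported $O(\ve^2)$-perturbation of the smooth bump $w_0(\|\x\|)w_0(y - \|\x\|^2)$ --- delivers uniformity in $y$ and in $\ve \leq 1$. A secondary subtlety is that $\lambda = \pi\ve\hat v_4$ may be negative; this is harmless, provided the $k_j$ are permitted to be complex, which the statement already allows.
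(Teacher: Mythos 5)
Your proof is correct and follows essentially the same route as the paper: complete the square, apply the stationary phase expansion of Lemma~\ref{lem:phase} (with $n=3$, $\lambda = \pi\ve\hat v_4$) to the inner integral $I(y)$ arising in \eqref{eq:neddy}, and control the $y$-integration through \eqref{eq:seren} with $k=\ell=0$. The one (harmless) variation is in the treatment of the $\delta(\hat \c)=0$ case: you apply Lemma~\ref{lem:phase} uniformly and observe that every Taylor coefficient $\phi_j(y)=(\Delta^j\psi_y)(\a/(\ve\hat v_4))$ vanishes because the stationary point then sits outside the compact support of $\psi_y$ (this requires the implied constant in the definition of $\delta(\hat\c)$ to be taken $\geq 1$), whereas the paper invokes the non-stationary phase bound \cite[Lemma~10]{HB} to obtain the stronger estimate $T(y)\ll_A \ve|\lambda|^{-A}$; since $A$ is arbitrary, both arguments yield the same second error term $\ve|\ve\hat v_4|^{-\frac{5}{2}-A}$.
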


\begin{proof}
It will be convenient to set 
 $\lambda= \ve\hat v_4$ in the proof of this result,
 recalling our hypothesis that $|\lambda|>1$. 
  Our starting point is the expression for $T(y)$ in \eqref{eq:neddy}, in which $I(y)$ is given by \eqref{eq:Iy}.
By completing the square, we may write
$$
T(y)
=\frac{\ve}{2}
\left(1+O(|\lambda|\ve^2)\right)  
e\left(-\frac{\|\a\|^2}{2\lambda}\right)
I^*(y),
$$
since $|\lambda|>1$, 
where
$$
I^*(y)
=\int_{\RR^3}
\psi_y\left(\x+\frac{\a}{\lambda}\right)
e\left(\frac{\lambda}{2} \|\x\|^2 \right)
\d \x.
$$

If  $|\a|\gg \ve|\hat v_4|$, then it follows from \cite[Lemma 10]{HB} that 
$
T(y)\ll_A \ve |\lambda|^{-A} ,
$
for any $A\geq 0$.
Alternatively, if $|\a|\ll  \ve|\hat v_4|$, which is equivalent to $\delta(\hat \c)=1$, then all the hypotheses of Lemma~\ref{lem:phase} are met. 
Thus,  for any $A\geq 0$, 
 there exist constants $k_j$ that depend only on $j$ such that 
$$
I^*(y)=
\frac{1}{
\lambda^{\frac{3}{2}}} \sum_{j=0}^A  \frac{k_j\Delta^j \psi_y( \lambda^{-1}\a)}{\lambda^j} +O_{A}\left(\frac{1}{|\lambda|^{\frac{5}{2}+A}}\right).
$$
Hence we conclude from \eqref{eq:neddy} that 
\begin{align*}
T(y)=
\frac{\ve \delta(\hat \c)}{
2\lambda^{\frac{3}{2}}} 
e\left(-\frac{\|\a\|^2}{2\lambda}\right)
\sum_{j=0}^A  \frac{k_j\Delta^j \psi_y( \lambda^{-1}\a)}{\lambda^j}
+O_{A}\left(\frac{\ve^3}{|\lambda|^{\frac{1}{2}}}+\frac{\ve}{ |\lambda|^{\frac{5}{2}+A}}\right).
\end{align*}
We now wish to 
substitute this into our  expression \eqref{eq:m4} for 
$I_r^*(\v)$.  In order to control the contribution from the error term, we apply \eqref{eq:seren} with $\ell=0$.
We 
therefore arrive at the statement of the lemma on 
redefining $k_j$ to be $k_j/2$.
\end{proof}

It remains to  consider the  integral 
\begin{equation}\label{eq:J}
\begin{split}
J_{j,q}(\c)
&=\int_\RR h\left(r,y\right) e\left(-\frac{\ve \hat v_4 y }{2}\right) \phi_j(y)\d y\\
&=
\int_\RR h\left(\frac{q}{Q},y\right) e\left(-\frac{\ve \hat c_4 y Q}{2q}\right) \phi_j(y)\d y,
\end{split}
\end{equation}
for $j\geq 0$.  
Recollecting \eqref{eq:psi}, all we shall need to know about $\phi_j$ is that it is a smooth compactly supported function with bounded derivatives, and that it does not depend on $q$. 
(Note that we may assume that $|(\hat c_1,\hat c_2, \hat c_3) |\ll \ve |\hat c_4|$ in what follows, since otherwise $\delta(\hat \c)=0$.)

\begin{lemma}\label{lem:partial}
Let $\c\in \mathcal{C}$
and  $k\in \{0,1\}$.
 Then
$$
q^k \frac{\partial^k J_{j,q,}(\c)}{\partial q^k} \ll_j  N^\delta.
$$
\end{lemma}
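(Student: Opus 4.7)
The plan is to separate the two cases of $k$ and apply the integral bound \eqref{eq:seren} (together with the membership $\c\in\mathcal{C}$) in each. Throughout, observe that $\phi_j$ is a smooth, compactly supported function independent of $q$, so only $h$ and the exponential factor depend on $q$.

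For $k=0$, I would simply bound $J_{j,q}(\c)$ in absolute value, pulling out $\|\phi_j\|_\infty = O_j(1)$, and use \eqref{eq:seren} with $k=\ell=0$, i.e.\ $\int_\RR |h(q/Q,y)|\,\d y \ll 1$. This gives $J_{j,q}(\c)\ll_j 1$, which is stronger than the claimed $N^\delta$.

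For $k=1$, I would differentiate under the integral sign. Since $r = q/Q$, the derivative produces exactly two terms:
\begin{equation*}
q\frac{\partial J_{j,q}(\c)}{\partial q}
 = \int_\RR r\frac{\partial h(r,y)}{\partial r}\,e\!\left(-\tfrac{\ve\hat c_4 y Q}{2q}\right)\phi_j(y)\,\d y
 + \frac{\pi i\ve\hat c_4 Q}{q}\int_\RR y\,h(r,y)\,e\!\left(-\tfrac{\ve\hat c_4 y Q}{2q}\right)\phi_j(y)\,\d y.
\end{equation*}
The first integral is $O(1)$ by \eqref{eq:seren} with $k=1$, $\ell=0$. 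For the second, \eqref{eq:seren} with $k=0$, $\ell=1$ gives $\int|y h(r,y)|\,\d y \ll r = q/Q$, so the second term is bounded by $O(\ve|\hat c_4|)$. Since $\c\in\mathcal{C}$ enforces $\ve|\hat c_4|\leq N^\delta$, this closes the estimate.

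I do not anticipate a genuine obstacle here: the only place where anything could conceivably blow up is the $q^{-2}$ from differentiating $e(-\ve\hat c_4 yQ/(2q))$, which is precisely compensated by the factor $y$ that appears and that is controlled by the $\ell=1$ case of \eqref{eq:seren}. The role of the membership $\c\in\mathcal{C}$ is just to convert a trivial $\ve|\hat c_4|$ bound into the stated $N^\delta$. No stationary-phase input, no Taylor expansion, and no differentiation of $\phi_j$ is required, which reflects that the lemma is essentially a bookkeeping estimate ensuring that partial summation in $q$ can later be applied to $J_{j,q}(\c)$ cleanly.
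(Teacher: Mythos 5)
Your proof is correct and matches the paper's argument essentially line for line: both cases rest on the same two applications of \eqref{eq:seren} (with $(k,\ell)=(1,0)$ and $(0,1)$), differentiation under the integral sign splits $q\,\partial_q J_{j,q}(\c)$ into the same two pieces, and the final step invokes $\ve|\hat c_4|\le N^\delta$ from $\c\in\mathcal{C}$ exactly as in the paper. The only cosmetic difference is that you multiply through by $q$ before estimating, whereas the paper bounds $\partial_q J_{j,q}(\c)$ by $N^\delta/q$ and multiplies at the end.
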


\begin{proof}
When $k=0$ the result follows immediately from \eqref{eq:seren}. 
Suppose next that $k=1$. Then  \eqref{eq:J} implies that 
\begin{align*}
 \frac{\partial J_{j,q}(\c)}{\partial q} =~& \frac{1}{Q}
 \int_\RR \frac{\partial h\left(r,y\right)}{\partial r} e\left(-\frac{\ve \hat c_4 y Q}{2q}\right) \phi_j(y)\d y\\ &+ 
 \int_\RR \frac{\pi i \ve \hat c_4	y Q}{q^2} h\left(r,y\right) e\left(-\frac{\ve \hat 
c_4 y Q}{2q}\right) \phi_j(y)\d y\\
=~&J_1+J_2,
\end{align*}
say. It follows from \eqref{eq:seren} that 
 $J_1\ll_j Q^{-1}r^{-1} =q^{-1}$, which is satisfactory. 
Next, a further application of \eqref{eq:seren} yields 
$$
J_2
\ll_j \frac{\ve |\hat c_4|Q}{q^2} 
 \int_\RR \left|y  h\left(r,y\right)\right|\d y
\ll_j \frac{\ve |\hat c_4|Q}{q^2}  \cdot r \leq \frac{N^\delta}{q},
 $$
for  $\c\in \mathcal{C}$.
\end{proof}

\section{Putting everything together}\label{s:final}

It is now time to return to \eqref{eq:2}, in order to conclude the proof of Theorem~\ref{thm1}. 

\subsection{The main term}

We begin by dealing with the main contribution, which comes from the term $\c=\0$.  Denoting this by $M(w)$, we see that 
\begin{equation}\label{eq:Mw}
M(w)=\frac{1}{Q^2}\sum_{q\ll Q} q^{-4}S_q(\0)I_q(\0)+O_A(Q^{-A}),
\end{equation}
for any $A>0$. 

In view of  \eqref{eq:psi}, 
$\psi_0(\x)$ is equal to 
$$ 
w_0\left(2\ve^{-2}(
-1+\sqrt{1-\ve^2\|\x\|^2})\right)\\
w_0\left(\sqrt{\|\x\|^2+
\ve^{-2}(1-\sqrt{1-\ve^2\|\x\|^2})^2}\right).
$$
As in 
\eqref{eq:approximate}, when
 $\psi_0(\x)\neq 0$ we must have 
\begin{align*}
2\ve^{-2}\left(-1+\sqrt{1-\ve^2\|\x\|^2}\right) &=-\|\x\|^2+O(\ve^2)\\
\|\x\|^2+
\ve^{-2}(1-\sqrt{1-\ve^2\|\x\|^2})^2&=\|\x\|^2+O(\ve^2). 
\end{align*}
In particular it is clear that 
\begin{equation}\label{eq:sigma}
\sigma_\infty=
\int_{\RR^3} \psi_0(\x)
\d\x\gg 1,
\end{equation}
for an absolute implied constant.
We now establish the following result.

\begin{lemma}\label{lem:I0}
We have  
$$
I_q(\0)
=\tfrac{1}{2}
\ve^5N^2 
\sigma_\infty
 \left(1+O(\ve^2)+O_A\left((q/Q)^A\right)\right),
$$
for any $A>0$, 
where  $\sigma_\infty$ is given by \eqref{eq:sigma}.
\end{lemma}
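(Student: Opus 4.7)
The plan is to derive this directly from Lemma~\ref{lem:I-small} applied at $\v=\0$, since in that degenerate case the awkward parameter $\hat v_4$ vanishes and the asymptotic collapses to a single main term.

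First, I would recall from \eqref{eq:goaty} that $I_q(\0) = \ve^4 N^2 I_r^*(\0)$, with $r = q/Q$. So the whole task reduces to proving
\[
I_r^*(\0) = \tfrac{1}{2}\ve\,\sigma_\infty\bigl(1 + O(\ve^2) + O_A(r^A)\bigr).
\]
Next, I would trace through the derivation of \eqref{eq:m4}--\eqref{eq:Iy} specialised to $\v=\0$: this forces $\hat v_4=0$ and $\a=\0$, so the phase $e(-\ve\hat v_4 y/2)$ and $e(\ve\hat v_4\|\x\|^2/2 - \a.\x)$ both trivialise. The expression for $T(y)$ from \eqref{eq:neddy} then simplifies cleanly to $T(y) = \tfrac{\ve}{2}(1+O(\ve^2)) I(y)$ with $I(y) = \int_{\RR^3}\psi_y(\x)\,\d\x$, and no remaining oscillation from the Taylor expansion in \eqref{eq:egg} is needed (the factor $(1+O(|\ve\hat v_4|\ve^2))$ is just $1+O(\ve^2)$).

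With this simplification,
\[
I_r^*(\0) = \tfrac{\ve}{2}\bigl(1+O(\ve^2)\bigr)\int_{\RR} h(r,y) I(y)\,\d y.
\]
The key analytic input is the $j=0$ case of \eqref{eq:sheep}, which asserts that $\int_\RR h(r,y) I(y)\d y = I(0) + O_A(r^A)$ for any $A\geq 0$. This is exactly the content of \cite[Lemma 9]{HB} applied to the weight $I(y)$, which is smooth and compactly supported (uniformly in $r$ and the remaining parameters), so the hypotheses of that lemma are satisfied. Setting $y=0$ in the definition \eqref{eq:Iy} of $I(y)$ gives $I(0) = \int_{\RR^3}\psi_0(\x)\,\d\x = \sigma_\infty$.

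Combining,
\[
I_r^*(\0) = \tfrac{\ve}{2}\bigl(1+O(\ve^2)\bigr)\bigl(\sigma_\infty + O_A(r^A)\bigr) = \tfrac{\ve\sigma_\infty}{2}\bigl(1 + O(\ve^2) + O_A(r^A)\bigr),
\]
where in the last step I absorb the additive error into a multiplicative one using the lower bound $\sigma_\infty \gg 1$ from \eqref{eq:sigma}. Multiplying through by $\ve^4 N^2$ and recalling $r=q/Q$ yields the claim. The only step that is not purely mechanical is invoking \eqref{eq:sheep}, which is a standard Fourier-analytic fact about $h$; everything else is bookkeeping in the chain of substitutions defining $I_r^*(\v)$.
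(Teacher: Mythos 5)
Your proof is correct and takes essentially the same route as the paper: the paper also specialises the $\v=\0$ chain of identities \eqref{eq:m4}--\eqref{eq:m4''} to reduce to $\int_\RR h(r,y)K^*(y)\,\d y$ (your $\int h(r,y)I(y)\,\d y$) and then applies \cite[Lemma~9]{HB}, exactly as you do via the $j=0$ case of \eqref{eq:sheep}. Your opening observation that the lemma can equally be read off as the $\v=\0$ case of Lemma~\ref{lem:I-small} is a small presentational economy, but the underlying content is identical.
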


\begin{proof}
Returning to \eqref{eq:goaty}, it follows from 
\eqref{eq:m4} and \eqref{eq:m4'} that 
\begin{align*}
I_q(\0)
&=
\ve^4N^2 
 \int_\RR h(r,y) 
K(y) \d y,
\end{align*}
where 
$$
K(y)=\int_{\RR^3}
w_0(\|\u\|)
w_0( 2u_4/\ve)\, \frac{\d u_1\d u_2 \d u_3}{2/\ve +2u_4},
$$
and $u_4$ is given in terms of $y,u_1,u_2,u_3$ by \eqref{eq:u4}. 
Using  \eqref{eq:m4''}, we may write
$$
K(y)=\frac{\ve}{2}\left(1+O(\ve^2)\right) K^*(y), 
\quad \text{ with }
K^*(y)=\int_{\RR^3} \psi_y(\x)\d\x.
$$
The integral $K^*(y)$ is a smooth weight function  belonging to the class of weight functions considered in \cite[Lemma 9]{HB}. Noting from \eqref{eq:sigma} that $K^*(0)=\sigma_\infty$, 
it  therefore follows from this result that 
$$
 \int_\RR h(r,y) 
K^*(y) \d y=\sigma_\infty+O_A(r^A),
$$
for any $A>0$. We therefore deduce that 
\begin{align*}
I_q(\0)
=\tfrac{1}{2}
\ve^5N^2 
\sigma_\infty
 \left(1+O(\ve^2)+O_A(r^A)\right),
 \end{align*}
which completes the proof of the lemma.
\end{proof}

Now it is clear from  \S\ref{s:sum} 
that 
$
q^{-4}|S_q(\c)|\leq 4q^{-2}|S(m,n;q)|, 
$
for any vector $\c\in \ZZ^4$,  
where $(m,n)$ is $(N,F(\hat \c)/4)$, $(N/2,F(\hat \c)/2)$ 
or $(N/4,F(\hat \c))$ depending on whether $4\mid q$, $q \equiv 2 \bmod{4}$ or $2\nmid q$, respectively. 
Hence it follows from \eqref{eq:weil}, together with  the standard estimate for the divisor function, that 
\begin{equation}\label{eq:view}
\begin{split}
\sum_{t/2< q\leq t} q^{-4}|S_q(\c)|
\ll \sum_{t/2< q\leq t} q^{-2}
|S(m,n;q)|
&\ll_\delta t^{\delta/2} \sum_{t/2<q\leq t} \frac{\sqrt{(q,N)}}{q^{3/2}}\\
&\ll_\delta t^{-1/2+\delta/2} N^{\delta/2}, 
\end{split}
\end{equation}
for any $t>1$ and any  $\delta>0$.
Returning to \eqref{eq:Mw}, we may now conclude from Lemma \ref{lem:I0} 
and \eqref{eq:view} with $\c=\0$, 
that the contribution to $M(w)$ from $q\leq Q^{1-\delta}$ is 
\begin{align*}
&=\frac{1}{Q^2}\sum_{q\leq Q^{1-\delta}} q^{-4}S_q(\0)I_q(\0)+O_A(Q^{-A})\\
&=\frac{\ve^5N^2 }{2Q^2}
\sigma_\infty \mathfrak{S}(Q^{1-\delta}) +O\left(\frac{\ve^7N^{2+\delta/2}}{Q^2}\right) 
+O_A(Q^{-A}),
\end{align*}
where
$$
\mathfrak{S}(t)=\sum_{q\leq t} q^{-4}S_q(\0).
$$
This sum is absolutely convergent and satisfies 
$\mathfrak{S}(t)=\mathfrak{S}+O_\delta(t^{-1/2+\delta/2}N^{\delta/2})$,
for any $\delta>0$, 
by \eqref{eq:view}. Here, in the usual way,  
 $\mathfrak{S}$ is the Hardy--Littlewood product of local densities recorded in  \eqref{eq:density}.

Next, on invoking 
\eqref{eq:view}, once more, 
 the contribution from 
$q> Q^{1-\delta}$ is 
\begin{align*}
&\ll_A \frac{\ve^5N^2}{Q^2}\sum_{q> Q^{1-\delta}} q^{-4}|S_q(\0)|+Q^{-A}
\ll \frac{\ve^5N^{2+\delta/2}Q^{\delta/2}}{Q^{5/2}}. 
\end{align*}
Hence we have established the following result,  on recalling that $Q=\ve\sqrt{N}$,
which shows that the main term  is satisfactory for Theorem \ref{thm1}.

\begin{lemma}
For any $\delta>0$ we  have 
$$
M(w)=\frac{\ve^3N \sigma_\infty \mathfrak{S}}{2}
+O_\delta\left(
\ve^5N^{1+\delta}+
\ve^{\frac{5}{2}}N^{\frac{3}{4}+\delta}
\right).
$$
\end{lemma}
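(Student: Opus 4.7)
The plan is to start from the extraction already performed,
$$M(w) = \frac{1}{Q^2}\sum_{q\ll Q} q^{-4} S_q(\0) I_q(\0) + O_A(Q^{-A}),$$
split the $q$-sum at $q = Q^{1-\delta}$, and analyse each range separately using Lemma~\ref{lem:I0} and the Weil-bound consequence \eqref{eq:view}. The small range will yield both the main term and the first error term, while the large range will yield the second error term.

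On the range $q \leq Q^{1-\delta}$, the factor $O_A((q/Q)^A)$ appearing in Lemma~\ref{lem:I0} is bounded by $Q^{-A\delta}$ and so is negligible once $A$ is chosen large enough. What remains is
$$\frac{\ve^5 N^2 \sigma_\infty}{2 Q^2}\,\mathfrak{S}(Q^{1-\delta})\bigl(1 + O(\ve^2)\bigr), \qquad \mathfrak{S}(t) = \sum_{q \leq t} q^{-4} S_q(\0).$$
The substitution $Q^2 = \ve^2 N$ converts the prefactor into $\tfrac{1}{2}\ve^3 N \sigma_\infty \mathfrak{S}(Q^{1-\delta})$; the $O(\ve^2)$ relative error contributes $O_\delta(\ve^5 N^{1+\delta})$ once $\mathfrak{S}(Q^{1-\delta})$ is bounded crudely via \eqref{eq:view}. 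Absolute convergence of the series (also a consequence of \eqref{eq:view}) gives $\mathfrak{S}(t) = \mathfrak{S} + O_\delta(t^{-1/2+\delta/2}N^{\delta/2})$, and inserting $t = Q^{1-\delta}$ together with $Q = \ve\sqrt{N}$ produces the claimed main term plus an additional error $O_\delta(\ve^{5/2}N^{3/4+\delta})$.

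For $q > Q^{1-\delta}$, I would discard the oscillatory information and use only the uniform bound $|I_q(\0)| \ll \ve^5 N^2$ that is implicit in Lemma~\ref{lem:I0}. A dyadic sum of \eqref{eq:view} from $Q^{1-\delta}$ to infinity gives $\sum_{q > Q^{1-\delta}} q^{-4} |S_q(\0)| \ll_\delta Q^{-1/2 + \delta} N^{\delta/2}$, so that the overall contribution from this range is again $O_\delta(\ve^{5/2} N^{3/4 + \delta})$.

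Collecting the three contributions proves the lemma. There is no genuine obstacle here: the only substantive check is that the truncation exponent $1-\delta$ balances the two Weil-type errors (from replacing $\mathfrak{S}(Q^{1-\delta})$ by $\mathfrak{S}$ and from summing over $q > Q^{1-\delta}$), and they conveniently both evaluate to $O_\delta(\ve^{5/2} N^{3/4+\delta})$ independently of small perturbations in the truncation. The real analytic content has already been extracted in Lemma~\ref{lem:I0} via stationary phase and in Weil's bound on the individual Kloosterman sums; what is left is essentially bookkeeping.
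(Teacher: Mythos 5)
Your proposal is correct and follows essentially the same route as the paper: split the $q$-sum at $Q^{1-\delta}$, use Lemma~\ref{lem:I0} on the small range to extract $\tfrac{1}{2}\ve^3 N\sigma_\infty\mathfrak{S}(Q^{1-\delta})$ with an $O(\ve^2)$ relative error giving $\ve^5 N^{1+\delta}$, replace $\mathfrak{S}(Q^{1-\delta})$ by $\mathfrak{S}$ via the Weil-bound tail estimate to pick up $\ve^{5/2}N^{3/4+\delta}$, and bound the large range trivially with $|I_q(\0)|\ll\ve^5 N^2$ and a dyadic application of \eqref{eq:view} to get the same $\ve^{5/2}N^{3/4+\delta}$. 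The bookkeeping and exponents all check out.
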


\subsection{The error term}

It remains to analyse the contribution $E(w)$, say, to $\Sigma(w)$ from vectors $\c\neq \0$
in \eqref{eq:2}. 
According to our work in 
\S\ref{s:sum} the value of $S_q(\c)$ differs according to the residue class of $q$ modulo $4$. We have 
$$E(w)=\sum_{i\bmod 4}E_i(w),
$$ 
where $E_i(w)$ denotes the contribution from $q\equiv i\bmod{4}$. 
Recall the definition of $\mathcal{C}$ from after the statement of Lemma \ref{largec}. 
In order to unify our  treatment of the four cases,  we write $\mathcal{C}_1=\mathcal{C}_2=\mathcal{C}$ and we denote by
$\mathcal{C}_2$ (resp.~$\mathcal{C}_4$)  the set of $\c\in \mathcal{C}$ for which $2\nmid c_1\dots c_4$ (resp.~$2\mid \c$). 
It will also be convenient to  set 
\begin{gather*}
(m_1,n_1)=
(m_3,n_3)=(N/4,F(\c)),\\
(m_2,n_2)=(N/2,F(\c)/2), \quad (m_4,n_4)=(N,F(\c)/4).
\end{gather*} 
In particular,  $m_in_i=NF(\hat \c)/4>0$ for $1\leq i\leq 4$, since $F(\c)=F(\hat \c)$.  

Let $1\ll R\ll Q$. We denote by $E_i(w,R)$ the overall contribution to $E_i(w)$ from $q\sim R$. (We write $q\sim R$ to denote $q\in (R/2,R]$.)
On recalling  \eqref{eq:goaty}, 
 it follows from our work so far that 
\begin{equation}\begin{split}
\label{eq:Ei}
E_i(w,R) 
&\ll \frac{1}{Q^2}
\sum_{\substack{\c \in \mathcal{C}_i\\ \c\neq \0}}
\left|\sum_{\substack{q\sim R\\ q\equiv i\bmod{4}}} q^{-2} S(m_i, n_i;q)
I_q(\c)\right|\\
&\ll  \frac{\ve^4N^2}{Q^2}
\sum_{\substack{\c\in \mathcal{C}_i\\ \c\neq \0}}
\left|\sum_{\substack{q\sim R\\
q\equiv i\bmod{4}
}} q^{-2}S(m_i,n_i;q)e_{r}(-\varepsilon^{-1}\c.\boldsymbol{\xi})I_r^{*}(\v)\right|.
\end{split}
\end{equation}

\subsubsection*{Contribution from large  $q$} 
Suppose first that $R\geq Q^{1-\eta}$, for some small $\eta>0$. (The choice $\eta=2\delta$ is satisfactory.)
We have 
$$
e_{r}(-\varepsilon^{-1}\c.\boldsymbol{\xi})=
e\left(\frac{2\sqrt{m_in_i}}{q}\alpha\right),
$$
with 
\begin{align*}
|\alpha|&=\varepsilon^{-1}|\hat c_4|
\cdot \frac{Q}{q} \cdot  \frac{q}{2\sqrt{m_in_i}}=
\frac{|\hat c_4| }{\sqrt{F(\hat \c)}} \leq 1.
\end{align*}
It now follows from Conjecture \ref{c:linnik}  that 
\begin{equation}\label{eq:L(t)}
L(t)=
\sum_{\substack{q\leq t\\ q\equiv i\bmod{4}}} \frac{S(m_i,n_i;q)}{q}
e\left(\frac{2\sqrt{m_in_i}}{q}\alpha\right)
\ll_\delta (tN)^\delta.
\end{equation}
Applying partial summation, based on Lemma \ref{lem:jumper}, 
we deduce that 
\begin{align*} 
E_i(w,R) 
&\ll_\delta  \frac{\ve^5N^{2+O(\delta)}}{Q^3}\cdot \frac{Q^2}{R^2}\cdot
\sum_{\substack{\c\in \mathcal{C}_i\\ \c\neq \0}}\frac{1}{\max\{1,\ve|\hat c_4|Q/R\}^{\frac{3}{2}}}\\
&\ll_\delta  \frac{\ve^5N^{2+O(\delta)}}{QR^2}\cdot \frac{\ve^{-1}R}{Q}\\
&=  \frac{\ve^4N^{2+O(\delta)}}{Q^2R}.
\end{align*}
Since $R\geq Q^{1-\eta}$, we deduce that 
$$
E_i(w,R) 
\ll_{\delta}
  \frac{\ve^4N^{2+O(\delta)}Q^\eta}{Q^3}
\leq
 \ve N^{\frac{1}{2}+O(\delta)+\eta}.
$$
This is satisfactory for Theorem \ref{thm1}, on redefining the choice of $\delta$, provided that  $\eta$ is small enough.

\subsubsection*{Contribution from small $q$ and small $\ve|\hat v_4|$} 

For the rest of the proof we suppose that $R<Q^{1-\eta}$. 
 Let us put $$
 \b=(\hat c_1,\hat c_2,\hat c_3),
 $$ 
 so that $\a=r^{-1}\b$ in Lemmas \ref{lem:I-small} and \ref{lem:I-stationary}.
Let $E_i^{(\text{small})}(w,R)$ denote the contribution to $E_i(w,R)$ from $\c\in \mathcal{C}_i$ such that 
\begin{equation}\label{eq:cat}
\ve|\hat c_4|\leq \frac{R^{1+\delta}}{Q}.
\end{equation}
In this case it is advantageous to apply Lemma \ref{lem:I-small} to evaluate $I_r^*(\v).$  
To begin with,  we consider the effect of substituting the main term 
from Lemma \ref{lem:I-small}.  
Noting that $(\ve \hat v_4)^{-1}\a=(\ve \hat c_4)^{-1}\b$ does not depend on $q$, 
we deduce from  \eqref{eq:Iy} that the only dependence on $q$ in $I(y)$ comes through the term 
$$
e\left(\frac{\ve \hat v_4}{2}\|\x\|^2-\a.\x\right)=e_r\left(\frac{\ve \hat c_4}{2}\|\x\|^2-\b.\x\right),
$$
in the integrand. 
Thus,  the main term 
in Lemma \ref{lem:I-small} makes 
the overall contribution
\begin{equation}\label{eq:anna}
\ll \frac{\ve^5N^2}{Q^2}
\sum_{\substack{\c\in \mathcal{C}_i\\ \c\neq \0\\ 
\text{\eqref{eq:cat} holds}}} 
\left|
\sum_{\substack{q\sim R \\ q\equiv i\bmod{4}}} \frac{S(m_i,n_i;q)}{q^2}e_{r}(-\varepsilon^{-1}\c.\boldsymbol{\xi})
I(0)
\right|
\end{equation}
to $E_i^{\text{(small)}}(w,R)$, where we recall from \eqref{eq:Iy}
that 
$$
I(0)=\int_{\RR^3} \psi_0(\x) e_r\left(\frac{\ve \hat c_4}{2}\|\x\|^2-\b.\x\right) \d\x.
$$
If $\c\neq \0$ and $|\hat c_4|\le \frac{1}{100}$ then 
$$
\|\b\|^2 =F(\hat \c)-\hat c_4^2=F(\c)-\hat c_4^2\gg 1.
$$ 
It therefore follows from \cite[Lemmas 3.1 and 3.2]{HBP} that 
$$
I(0)\ll_A \left(\frac{q}{|\b|Q}\right)^A \ll_A   Q^{-\eta A},
$$
since $q\leq Q^{1-\eta}$ in this case. The overall contribution to \eqref{eq:anna} from vectors $\c$  such that $ |\hat c_4|\le \frac{1}{100}$ is therefore seen to be satisfactory.

On interchanging the sum and the integral we are left with the contribution 
\begin{equation}\label{eq:anna+}
\ll \frac{\ve^5N^2}{Q^2}
\sum_{\substack{\c\in \mathcal{C}_i\\ |\hat c_4| > \frac{1}{100}\\ 
\text{\eqref{eq:cat} holds}}} 
\int_{[-1,1]^3}
|M_i(\x)|\d\x,
\end{equation}
where 
$$
M_i(\x)=
\sum_{\substack{q\sim R \\ q\equiv i\bmod{4}}} \frac{S(m_i,n_i;q)}{q^2}e_{r}(-\varepsilon^{-1}\c.\boldsymbol{\xi})
e_r\left(\frac{\ve \hat c_4}{2}\|\x\|^2-\b.\x\right).
$$
But 
$$
e_{r}(-\varepsilon^{-1}\c.\boldsymbol{\xi})
e_r\left(\frac{\ve \hat c_4}{2}\|\x\|^2-\b.\x\right)=
e\left(\frac{2\sqrt{m_in_i}}{q}\alpha\right),
$$
with 
\begin{align*}
\alpha&=\left(-\varepsilon^{-1}\hat c_4
+\frac{\ve \hat c_4 \|\x\|^2 }{2}-\b.\x 
\right)\cdot \frac{Q}{q} \cdot  \frac{q}{2\sqrt{m_in_i}}\\
&
=
-\frac{\hat c_4 }{\sqrt{F(\hat \c)}} 
+
\frac{\ve^2 \hat c_4 \|\x\|^2 }{2\sqrt{F(\hat \c)}}-
\frac{\ve \b.\x }{\sqrt{F(\hat \c)}}.
\end{align*}
But the inequality
$
\max\{\|\b\|,|\hat c_4|\}\leq \sqrt{F(\hat \c)},
$ 
implies that 
$|\alpha|\leq 1+O(\ve)$, since
$\x\in [-1,1]^3$.
Thus it follows from combining partial summation with Conjecture \ref{c:linnik}  that 
$
M_i(\x)\ll_\delta R^{-1}N^{\delta}.
$
 (Recall  that $\ve^{-1}\leq \sqrt{N}$ and $R\leq Q^{1-\eta}\leq Q$.)
Returning to \eqref{eq:anna+}, we conclude that the 
overall contribution to $E_i^{\text{(small)}}(w,R)$ from the 
main term in Lemma \ref{lem:I-small} is
\begin{align*}
\ll_\delta \frac{\ve^5N^{2+\delta}}{RQ^{2}}\#\left\{\c\in \mathcal{C}_i: 
\text{$|\hat c_4|> \tfrac{1}{100}$ and \eqref{eq:cat} holds}\right\}
&\ll_\delta \frac{\ve^4N^{2+4\delta}R^\delta}{Q^{3}}\\
&\ll_\delta \ve N^{\frac{1}{2}+5\delta}.
\end{align*}
This is satisfactory for Theorem \ref{thm1}.

It remains to study the effect of 
substituting the error term
from Lemma~\ref{lem:I-small} into 
\eqref{eq:Ei}.
Since $r\leq R/Q\leq Q^{-\eta}$ and 
$\ve|\hat v_4|=r^{-1}\ve|\hat c_4|\ll R^\delta$, by \eqref{eq:cat}, we see that the error term is 
\begin{align*}
\ll_A
\ve^3(1+\ve|\hat v_4|) +\ve(1+\ve|\hat v_4|)^{A}r^{A}
&\ll_A
\ve^3R^\delta +\ve R^{\delta A}Q^{-\eta A}\\
&\leq
\ve^3R^\delta +\ve Q^{A(\delta-\eta)}.
\end{align*}
On ensuring  that  $\delta<\eta$, we see that the second term
is an arbitrary negative power of $Q$ and so makes a satisfactory overall contribution to 
$E_i^{\text{(small)}}(w,R)$.
In view of   \eqref{eq:view}, the  
contribution from the term $\ve^3 N^\delta$ is found to be 
\begin{equation}\label{eq:shoe}
\begin{split}
\ll_\delta\frac{\ve^7 N^{2+\delta}}{Q^2R^{\frac{1}{2}}} 
\cdot \#\mathcal{C}_i
&\ll_\delta \frac{\ve^7 N^{2+\delta}}{Q^2} 
\cdot \ve^{-1}N^{4 \delta}
= \frac{\ve^6 N^{2+5\delta}}{Q^{2}} ,
\end{split}
\end{equation}
since $R\gg 1$.
The right hand side is $\ve^{4}N^{1+5\delta}$, which is also satisfactory for Theorem \ref{thm1}, on redefining $\delta$.

\subsubsection*{Contribution from small $q$ and large $\ve|\hat v_4|$} 

It remains to consider the case  $R<Q^{1-\eta}$ and
\begin{equation}\label{eq:dog}
\ve|\hat c_4|> \frac{R^{1+\delta}}{Q}.
\end{equation}
Let us write $E_i^{(\text{big})}(w,R)$ for the overall contribution to $E_i(w,R)$ from this final case.
Our main tool is now Lemma \ref{lem:I-stationary}.
Let $A\geq 0$. We begin by considering the effect of substituting the main term from this result into 
\eqref{eq:Ei}. 
This yields the contribution 
\begin{equation}\label{eq:anna'}
\ll  \frac{\ve^5N^2}{Q^2}
\sum_{\substack{\c\in \mathcal{C}_i\\ \text{\eqref{eq:dog} holds}}} \delta(\hat \c)
\sum_{j=0}^A \frac{|k_j|}{(\ve |\hat c_4|Q)^{\frac{3}{2}+j}}
|M_{i,j}|,
\end{equation}
where if $J_{j,q}(\c)$ is given by \eqref{eq:J}, then 
$$
M_{i,j}=
\sum_{\substack{q\sim R \\ q\equiv i\bmod{4}}} \frac{S(m_i,n_i;q)}{q}e_{r}(-\varepsilon^{-1}\c.\boldsymbol{\xi})
e_r\left(-\frac{\|\b\|^2}{2\ve \hat c_4}\right) q^{\frac{1}{2}+j} J_{j,q}(\c).
$$
Our plan is to use partial summation to remove the factor
$q^{\frac{1}{2}+j}J_{j,q}(\c)$.

First, as before, we note  that 
$$
e_{r}(-\varepsilon^{-1}\c.\boldsymbol{\xi})
e_r\left(-\frac{\|\b\|^2}{2\ve \hat c_4}\right)=
e\left(\frac{2\sqrt{m_in_i}}{q}\alpha\right),
$$
where
\begin{align*}
\alpha&=\left(-\varepsilon^{-1}\hat c_4
-\frac{ \|\b\|^2 }{2\ve \hat c_4}
\right)\cdot \frac{Q}{q} \cdot  \frac{q}{2\sqrt{m_in_i}}\\
&
=
-\left(\frac{\hat c_4 }{\sqrt{F(\hat \c)}} 
+
\frac{ \|\b\|^2 }{2\hat c_4 \sqrt{F(\hat \c)}}\right).
\end{align*}
We have  $|\alpha|\leq 1+O(\ve^2)$, since $\|\b\|\ll \ve |\hat c_4|$ when $\delta(\hat \c)\neq 0$.
Applying partial summation, based on \eqref{eq:L(t)} and Lemma \ref{lem:partial}, 
 we  deduce that 
$$
M_{i,j}=O_{j,\delta}(R^{\frac{1}{2}+j}N^{3\delta}).
$$
 Returning to \eqref{eq:anna'}, we 
conclude that the overall contribution 
to $E_i^{\text{(big)}}(w,R)$ 
from the main term in 
Lemma \ref{lem:I-stationary}
is 
\begin{align*}
\ll_{\delta,A}  \frac{\ve^5N^{2+3\delta}}{Q^2}\sum_{j=0}^A
\sum_{\substack{\c\in \mathcal{C}_i\\ \text{\eqref{eq:dog} holds}}} 
 \frac{R^{\frac{1}{2}+j}}{(\ve |\hat c_4|Q)^{\frac{3}{2}+j}}
&\ll_{\delta,A}  \frac{\ve^5N^{2+3\delta}}{Q^2}
\cdot \frac{N^{3\delta}}{\ve Q}
=\ve N^{\frac{1}{2}+6\delta}. 
\end{align*}
This is satisfactory for
Theorem \ref{thm1}, on redefining $\delta$.

We must now consider the effect of substituting the error term 
$$ 
\ll_{A}\frac{\ve^3}{|\ve\hat v_4|^{\frac{1}{2}}}+\frac{\ve}{ |\ve\hat v_4|^{\frac{5}{2}+A}}
$$
from Lemma~\ref{lem:I-stationary} into 
\eqref{eq:Ei}. Since $q\sim R$, it follows from 
\eqref{eq:dog} that 
$\ve |\hat v_4|\gg R^{\delta}$. 
The first term is therefore $O(\ve^3)$, which makes a satisfactory overall 
 contribution by  \eqref{eq:shoe}.
On the other hand, on invoking once more the argument in \eqref{eq:view}, the second term makes the overall contribution
\begin{align*}
&\ll_A  \frac{\ve^5N^2}{Q^2}
\sum_{\substack{\c\in \mathcal{C}_i\\ \text{\eqref{eq:dog} holds}}}
\sum_{\substack{q\sim R
}} \frac{q^{-2}|S(m_i,n_i;q)|}{|\ve \hat v_4|^{\frac{5}{2}+A}}\\
&\ll_{A,\delta}  \frac{\ve^5N^{2+\delta}}{R^{\frac{1}{2}}Q^2}
\left(\frac{R}{\ve Q}\right)^{\frac{5}{2}+A}
\sum_{\substack{\c\in \mathcal{C}_i\\ \text{\eqref{eq:dog} holds}}}
\frac{1}{|\hat c_4|^{\frac{5}{2}+A}}\\
&\ll_{A,\delta}  \frac{\ve^4 N^{2+4\delta}R^{\frac{1}{2}-A\delta}}{Q^3 }.
\end{align*}
This is $O_\delta(\ve N^{\frac{1}{2}+4\delta})$ on assuming that $A$ is 
is chosen so that $A\delta>\frac{1}{2}$.
This is also  satisfactory
 for Theorem \ref{thm1}, which thereby completes its proof.

\end{document}